\newtheorem{theorem}{Theorem}[section]
\newtheorem{lemma}[theorem]{Lemma}                                                                                                                                                                                                                                                                             
\newtheorem{definition}{Definition}
\newtheorem{notation}{Notation}[section]
\newtheorem{example}{Example}
\theoremstyle{definition}
\def\RR{\mathbb{R}}
\def\NN{\mathbb{N}}
\def\CC{\mathbb{C}}
\def\ZZ{\mathbb{Z}}
\def\L{\mathcal{L}}
\def\C{\mathcal{C}}
\def\bu{\mathbf{u}}
\def\:={\coloneqq}
\def\G{\mathbf{\Gamma}}
\def\Ss{{\mathcal{S}}}
\def\vp{{\varphi}}
\def\bvp{{\bm{\varphi}}}
\def\bp{{\bm{\phi}}}
\def\Scal{{\mathcal{S}}}
\def\bpsi{\bm{\psi}}
\def\H{\mathbf{H}}
\def\p{\partial}
\def\n{\nabla}
\def\a{\alpha}
\def\b{\beta}
\def\<{\left\langle}
\def\>{\right\rangle}
\def\ta{{\tilde{\alpha}}}
\def\tb{{\tilde{\beta}}}
\def\rmi {\mathrm{i}}
\def\tlambda{\tilde{\lambda}}
\def\tmu{\tilde{\mu}}
\newcommand{\Om}{\Omega}
\newcommand{\eqnref}[1]{(\ref {#1})}
\renewcommand{\qed}{\hfill $\Box$ \medskip}
\def\beq{\begin{equation}}
\def\eeq{\end{equation}}
\newcommand{\ds}{\displaystyle}
\newcommand{\bmat}[1]{\begin{bmatrix}
#1
\end{bmatrix}}
\numberwithin{equation}{section}
\numberwithin{figure}{section}
\definecolor{DH}{HTML}{2ea05e}
\definecolor{DS}{HTML}{408ad8}
\definecolor{MK}{HTML}{cc5050}
\begin{document}
\title{
A matrix formulation of the plane elastostatic inclusion problem via geometric function theory \thanks{\footnotesize
This study was supported by the National Research Foundation of Korea (NRF) grants funded by the Korean government (MSIT) (NRF-2021R1A2C1011804, RS-2024-00359109).}}

\author{
Daehee Cho\thanks{Department of Mathematical Sciences, Korea Advanced Institute of Science and Technology, 291 Daehak-ro, Yuseong-gu, Daejeon 34141, Republic of Korea (daehee.cho@kaist.ac.kr, mklim@kaist.ac.kr).}\and 
Doosung Choi\thanks{Department of Mathematics, Applied Mathematics, and Statistics, Case Western Reserve University, Cleveland, OH 44106, USA (doosung.choi@case.edu).} \and  
Mikyoung Lim\footnotemark[2]
}

\date{\today}
\maketitle

\begin{abstract}

We investigate the two-dimensional elastostatic inclusion problem in an unbounded medium. Building on the recent developments for rigid inclusions \cite{Mattei:2021:EAS} and conductivity inclusions \cite{Jung:2021:SEL}, we extend these methodologies to the more general case of elastic inclusions with arbitrary Lam\'{e} constants. Our approach integrates layer potential techniques, geometric function theory, and the complex-variable formulation in plane elasticity.
As a main result, we derive a matrix formulation of the elastostatic inclusion problem using basis functions defined via the exterior conformal mapping of the inclusion. This leads to a series solution framework that incorporates the geometry of the inclusion.

\end{abstract}

%
%
\noindent {\footnotesize {\bf Keywords.} {Linear elasticity; Lam\'{e} system; Inclusion problem; Faber polynomials}}



\section{Introduction}

We investigate the elastic inclusion problem in an unbounded medium containing an inhomogeneity whose material properties differ from those of the surrounding medium, under prescribed background solutions. Our primary objective is to determine the resulting perturbed elastic fields. 
Understanding how the shape and material parameters of inhomogeneities affect these fields is essential for analyzing the properties of composite materials \cite{Ammari:2006:EPE,Buryachenko:2007:MHM,Cherkaev:2022:GSE,Dvorak:2013:MCM,Qu:2006:FMS}.
Such inclusion problems, not limited to the elastic equation, have a long history, tracing back at least to the work of Poisson \cite{Poisson:1826:SMS} on the Newtonian potential problem, and later by Maxwell \cite{Maxwell:1873:TEM} on the electromagnetic fields (see also the review papers \cite{Parnell:2016:EHM,Zhou:2013:ARR}).  A noteworthy example is the {\it Eshelby conjecture}, which characterizes the shape of an inclusion that maintain the uniform strain field inside the inclusion under uniform external loadings \cite{Eshelby:1957:DEF,Eshelby:1961:EII,Kang:2008:SPS,Liu:2008:SEC,Sendeckyj:1970:EIP,Ru:1996:EIA,Kang:2009:CPS}.

Inclusions with simple geometries such as disks, spheres, ellipses, and ellipsoids have been thoroughly investigated under uniform loading conditions. These studies have provided explicit solutions using Cartesian or ellipsoidal coordinate systems \cite{Goodier:1933:CSA,Southwell:1926:OCS,Edwards:1951:SCA,Robinson:1951:EEE,Sadowsky:1947:SCA,Sadowsky:1949:SCA}. However, extending these solutions to more complex shapes is not straightforward, primarily due to the interface conditions on the inclusion boundary.
It is worth noting that numerical methods have been proposed to address the inclusion problems with generally shaped inclusions. \cite{Chiu:1977:OTS,Nozaki:1997:EFP,Rodin:1996:EIP,Zou:2010:EPN}.

This paper is focused on the elastostatic inclusion problem in two dimensions to determine the elastic fields in an unbounded isotropic background medium, containing a general inclusion of arbitrary shape under prescribed far-field loadings.

We employ complex analysis techniques, which have proven to be highly effective in addressing two-dimensional elasticity and conductivity inclusion problems.
In particular, the notable work of Muskhelishvili \cite{Muskhelishvili:1953:SBP} laid the foundation for the complex-variable formulation for the elastostatic problem. This formulation reduces the system of elasticity equations to a more tractable scalar-valued complex equation, enabling a simplified yet rigorous analysis, and it has been successfully applied to elastic problems involving inclusions of various shapes \cite{Ammari:2004:RSI,Ando:2018:SPN,Movchan:1997:PSM}, and to the derivation of explicit solutions under uniform far-field loadings \cite{Movchan:1997:PSM,Ru:1999:ASE,Zou:2010:EPN}.

Building on this complex-variable formulation for the elastostatic problem, Mattei and Lim \cite{Mattei:2021:EAS} recently developed an analytic series solution method for the inclusion problem involving a rigid inclusion of general shape under arbitrary far-field loading. This approach also incorporates the layer potential technique and geometric function theory. We construct the basis functions for the series solution using the exterior conformal mapping associated with the inclusion, denoted by $\Psi(w)$. We derive the explicit relations for the expansion coefficients as an infinite matrix equation by applying the rigid boundary condition.

Jung and Lim established a matrix formulation based on geometric function theory and layer potential in the scalar problem for the conductivity (or antiplane elasticity) equation \cite{Jung:2021:SEL}. This framework has been effectively applied to various problems, including shape recovery of planar inclusions \cite{Choi:2021:ASR, Choi:2023:IPP}, the design of neutral inclusions \cite{Choi:2023:GME,Choi:2024:CIV}, and spectral analysis of the Neumann--Poincar{\'e} operator \cite{Jung:2020:DEE,Cherkaev:2022:GSE,Choi:2025:SAN}. 

However, a corresponding matrix formulation for the full elastostatic case with general Lam\'{e} constants has not yet been developed. This is mainly due to the greater analytical complexity of the elasticity system, in contrast to the scalar nature of the conductivity equation. In particular, a key challenge in extending the results of \cite{Mattei:2021:EAS} to the general elastostatic inclusion problem lies in the difficulty of deriving explicit series expansions in powers of $w^{\pm n}$ for terms involving $1/\Psi'(w)$. These terms arise naturally in the layer potential formulation when the exterior conformal mapping is used to define new coordinates, significantly complicating the analysis.

In this work, we address such challenges by establishing cancellation relations for the terms involving $1/\Psi'(w)$. This key observation enables the formulation of an infinite-dimensional linear system that characterizes the elastostatic inclusion problem for arbitrary Lam\'{e} constants. Specifically, we show that the coefficient row vector \(\bm{x}\), representing the density functions in the layer potential formulation of the solution (expanded in a basis associated with the exterior conformal mapping $\Psi(w)$), satisfies an explicit $(8 \times 8)$ block-structured linear system (see Theorem \ref{thm:main:modi} in Section~\ref{subsec:main_results}):
\[
\bm{x}\mathbb{E}\,= -2\, \bm{h},
\]
where \(\mathbb{E}\) is a structured matrix determined by the geometry of the inclusion through the exterior conformal mapping $\Psi$, and $\bm{h}$ is the coefficient row vector of the applied loading in powers of $w^{\pm n}$, assumed to be known. 
The explicit form of \(\mathbb{E}\) is presented later in the paper. The matrix formulation leads to a series solution framework that incorporates the geometry of the inclusion.

The remainder of this paper is organized as follows. Section 2 presents the layer potential technique and complex-variable formulation for the plane elastostatic inclusion problem. Section 3 is devoted to geometric function theory. In Sections 4 and 5, we derive geometric series expansions for the background loading and the single-layer potential. The main result is stated in Section 6, and concluding remarks are provided in Section 7.

\section{Problem formulation}
 Let $\Om$ be an elastic inclusion embedded in $\RR^2$, where $\Om$ (resp., $\RR^2\setminus\overline{\Om}$) is occupied by homogeneous isotropic medium with the bulk modulus $\tilde{\lambda}$ and the shear modulus $\tilde{\mu}$ (resp. $\lambda$ and $\mu$). 
We assume that $\Om$ is bounded, simply connected with an analytic boundary. 

In the absence of an inclusion, a background displacement field $\bu_0$ in a homogeneous linear elastic medium with Lam\'{e} constants $\lambda$ and $\mu$, and subject to no body forces, satisfies
$$
 \nabla\cdot\bm\CC_0\widehat{\n}\bu_0=0, \quad \CC_0:=(C_{kl}^{ij})_{i,j,k,l=1}^2, \quad C_{kl}^{ij}=\lambda\delta_{ij}\delta_{kl}+\mu(\delta_{ik}\delta_{jl}+\delta_{il}\delta_{jk}),
$$
where $\widehat{\n}\bu_0 =\frac{1}{2}(\n\bu_0+(\n\bu_0)^T)$ is the symmetric strain tensor, $T$ denotes the matrix transpose, and $\delta_{ij}$ is the Kronecker delta. 
It follows that $\L_{\lambda,\mu}\bu_0=0$ in $\RR^2$, where $\L_{\lambda,\mu}$ is  the differential operator given by
\begin{align*}
\L_{\lambda,\mu}\bu_0=\mu\Delta\bu_0+(\lambda+\mu)\nabla\nabla\cdot\bu_0\quad\mbox{in }\RR^2.
\end{align*}
We assume $\mu>0$, $\lambda+\mu>0$ so that $\L_{\lambda,\mu}$ is elliptic.  The conormal derivative (or the traction term) is defined by
\begin{align}\notag
\frac{\partial\bu}{\partial\nu}=\left(\mathbb{C}_0 \widehat{\n}\bu\right)N=\lambda(\nabla\cdot\bu)N+\mu(\nabla\bu+\nabla \bu^T)N\quad\mbox{on }\p \Om,
\end{align}
where $N$ is the outward unit normal vector to $\p \Om$. 

Analogously, for Lam\'{e} constants $(\tilde{\lambda},\tilde{\mu})$, we define $\mathbb{C}_1$, $\mathcal{L}_{\tilde{\lambda},\tilde{\mu}}$, and $\partial / \partial \tilde{\nu}$. We assume $\tilde{\mu}>0, \tilde{\lambda}+\tilde{\mu}>0$, and $(\lambda -\tilde{\lambda})^2+(\mu-\tilde{\mu})^2\neq 0$.

In the presence of the inclusion $\Om\subset\RR^2$, the displacement field $\bu$ satisfies the transmission problem:
\begin{align}\label{eqn:main:trans}
\begin{cases}
\ds\mathcal{L}_{\lambda,\mu} \mathbf{u} = 0&\text{in }\RR^2\setminus\overline{\Om},\\
\ds \mathcal{L}_{\tilde{\lambda},\tilde{\mu}} \mathbf{u} = 0 &\text{in }\Omega,\\
\ds\mathbf{u}\big|^+ = \mathbf{u}\big|^-  &\text{on } \partial \Om,\\
\ds\dfrac{\partial \mathbf{u}}{\partial \nu}\Big|^+ = \dfrac{\partial \mathbf {u}}{\partial \tilde{\nu}}\Big|^- &\text{on } \partial \Om,\\
\ds(\mathbf{u-H})(x) = O(|x|^{-1}) &\text{as }x\to +\infty,
\end{cases}
\end{align}
where $\mathbf{H}$ is a far-field loading satisfying $\L_{\lambda,\mu}\mathbf{H}=0$ in $\RR^2$. We assume each component of $\mathbf{H}$ is a real-valued polynomial.

We focus on the analytic solution to \eqnref{eqn:main:trans} for a given arbitrary $\Om$ and far-field loading $\mathbf{H}$.
As a main result, we derive a matrix formulation for the planar elastostatic inclusion problem using a basis defined via the exterior conformal mapping of the inclusion (see Theorem \ref{thm:main:modi}).

\subsection{Layer potential formulation}\label{subsec:lame}

Let $\G=(\Gamma_{ij})_{i,j=1}^2$ be the Kelvin matrix of the fundamental solution to the Lam\'e operator $\L_{\lambda,\mu}$. For $x\in\RR^2\setminus\{0\}$,  the elements of the matrix are defined by
\begin{gather}\label{def:alpha:beta}
\Gamma_{ij}({x})
=\ds\frac{\a}{2\pi}\delta_{ij}\log|x|-\frac{\b}{2\pi}\frac{x_ix_j}{|x|^2}
\quad
\mbox{for }
\a=\tfrac{1}{2}(\tfrac{1}{\mu}+\tfrac{1}{2\mu+\lambda}), \ \b=\tfrac{1}{2}(\tfrac{1}{\mu}-\tfrac{1}{2\mu+\lambda}).
\end{gather}
For $\bvp=(\varphi_1,\varphi_2)\in L^2(\p \Om)^2$, we define the single-layer potential associated with $\L_{\lambda,\mu}$ by
 \begin{align*}
\Ss_{\p \Om}[\bvp](x)=&\int_{\partial \Om}\G(x-y)\bvp(y)d\sigma(y),\quad x\in\RR^2.
\end{align*}
It holds that $\L_{\lambda,\mu}\Ss_{\p \Om}[\bvp](x)=0$ in $\Om\cup(\RR^2\setminus\overline{\Om})$. The conormal derivative of this potential admits the jump relation:
\begin{align}\label{jump:single}
\dfrac{\partial}{\partial \nu}\Ss_{\p \Om}[\bvp]\Big|^\pm = \Big(\pm\frac{1}{2}I + \mathcal{K}_{\partial \Om}^*\Big)[\bvp]\quad\mbox{on }\partial \Om,
\end{align}
where $+$ and $-$ respectively indicate the limits from outside and inside $\Om$, and  $I$ is the identity operator on $L^2(\p\Om)^2$, and 
\begin{align}
\mathcal{K}_{\partial \Om}^*[\bvp](x) = \mbox{p.v.}\int_{\partial\Om}\dfrac{\partial}{\partial \nu_x}\G(x-y)\bvp(y)\, d\sigma(y),\quad x\in\partial \Om.
\end{align}
Here, p.v. denotes Cauchy's principal value. 

Analogously, for Lam\'{e} constants $(\tilde{\lambda},\tilde{\mu})$, we define the corresponding single-layer potential $\tilde{\mathcal{S}}_{\partial \Omega}$ and its conormal derivative $\frac{\partial}{\partial_{\tilde{\nu}}}\tilde{\mathcal{S}}_{\partial \Omega}$. We also define $\ta$ and $\tb$ in the same manner as in \eqref{def:alpha:beta}, using $(\tilde{\lambda}, \tilde{\mu})$.

We denote by $\Psi$ the space of rigid displacements, i.e.,
$
\mathcal{R}=\mbox{span}\left\{(1,0),(0,1),(x_2,-x_1)\right\},$
and set $$
L^2_\mathcal{R}(\partial \Om)=\big\{\bm{f}\in L^2(\partial \Om)^2:\textstyle\int_{\partial \Om}{\bm{f}}\cdot{\bm \theta} \, d\sigma=0\ \mbox{for all }{\bm \theta} \in \mathcal{R}\big\}.$$
For any $\bpsi\in\mathcal{R}$, it holds that $\L_{\lambda,\mu}\bpsi=0$ and $\frac{\p}{\p \nu} \bpsi=0$ on any interface.
If ${\bm{f}}\in L^2_\mathcal{R}(\partial \Om)$, then $\Ss_{\p \Om}[{\bm{f}}](x)=O(|x|^{-1})$ as $|x|\rightarrow +\infty$. 

From the jump relation \eqref{jump:single}, the solution to \eqnref{eqn:main:trans} can be expressed as
\begin{align}\label{sol:formula1}
\mathbf{u}(x) =
\begin{cases}
\mathbf{H}(x) + \mathcal{S}_{\partial \Omega}[\bpsi](x),& x\in\mathbb{R}^2\setminus\overline{\Omega},\\
\tilde{\mathcal{S}}_{\partial \Omega}[\bm{\phi}](x),& x\in \Omega,
\end{cases}
\end{align}
where $(\bm{\phi},\bpsi)\in L^2(\partial \Omega)^{2}\times L^2_{\mathcal{R}}(\partial \Omega )$ is the unique solution to the following boundary integral equations (refer to \cite{Escauriaza:1993:RPS} for the solvability):
\begin{align}\label{sol:formula2}
\begin{cases}
\ds \tilde{\mathcal{S}}_{\partial \Omega }[\bp]\big|^- - \mathcal{S}_{\partial \Omega}[\bpsi]\big|^+ = \mathbf{H} &\mbox{on }\partial \Omega ,\\[1mm]
\ds\dfrac{\partial }{\partial \tilde{\nu}}\tilde{\mathcal{S}}_{\partial \Omega }[\bp]\Big|^- - \dfrac{\partial }{\partial \nu }\mathcal{S}_{\partial \Omega }[\bpsi]\Big|^+ = \dfrac{\partial \mathbf{H}}{\partial \nu}&\mbox{on }\partial \Omega. 
\end{cases}
\end{align}

\subsection{Complex-variable formulation}\label{sec:complex}
Throughout this paper, we identify $\RR^{2}$ with $\CC$ by mapping $x=(x_1,x_2)$ to $z=x_1+\rmi x_2$.

For a vector-valued function $\bu(\cdot,\cdot)$ taking values in $\RR^2$, we adopt the complex notation:
\beq\label{complex:expression}
\begin{aligned}
u(z)&=\big(\bu(z)\big)_1+\rmi \,\big(\bu(z)\big)_2,\\
\partial_{\nu}u(z)& = \big(\dfrac{\partial \bu}{\partial \nu}(z)\big)_1 + \rmi \,\big( \dfrac{\partial \bu}{\partial \nu}(z)\big)_2,
\end{aligned}
\eeq
where $(\cdot)_j$ denotes the $j$-th component of a vector. Similarly, we define $\psi$, $\phi$, and $H$ as the complex-valued counterparts of $\bpsi$, $\bp$, and $\H$ given in \eqref{sol:formula1}, respectively.

\begin{lemma}[\cite{Ammari:2007:PMT,Muskhelishvili:1953:SBP}]\label{Complex_representation_of_solutions}
Let $\bu$ be the solution to \eqnref{eqn:main:trans} and $u$ be its corresponding complex function as in \eqref{complex:expression}.
Set $u_e:=u|_{\CC\setminus \overline{\Om}}$ and $u_i:=u|_{\Om}$. Then there are functions $f_e$ and $g_e$ holomorphic in $\CC\setminus\overline{\Om}$ and $f_i$ and $g_i$ holomorphic in $\Om$ such that
\begin{align}\label{complex:ue}
u_e(z)&={\kappa} f_e(z)-z\,\overline{{f_e}'(z)}-\overline{g_e(z)},\quad z\in\CC\setminus\overline{\Om},\\\label{complex:ui}
u_i(z)&=\tilde{\kappa} f_i(z)-z\,\overline{{f_i}'(z)}-\overline{g_i(z)},\quad z\in\Om,
\end{align}
where 
$$\kappa=\frac{\lambda+3\mu}{\lambda+\mu},\quad \tilde{\kappa}=\frac{\tlambda+3\tmu}{\tlambda+\tmu}.$$
Moreover,  for some constant $c$ and $z\in\p\Om$,
\begin{align} \label{complex:trans:1}
\kappa f_e(z)-z\,\overline{f_e'(z)}-\overline{g_e(z)}
&=\tilde{\kappa} f_i(z)-z\,\overline{f_i'(z)}-\overline{g_i(z)},\\ \label{complex:trans:2}
2\mu\left(f_e(z)+z\,\overline{f_e'(z)}+\overline{g_e(z)}\right)
&=2\tmu\left(f_i(z)+z\,\overline{f_i'(z)}+\overline{g_i(z)}\right)+c.
\end{align}
\end{lemma}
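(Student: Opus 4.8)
The plan is to derive \eqref{complex:ue}--\eqref{complex:ui} from the classical Kolosov--Muskhelishvili representation of plane elastic displacements and then to obtain \eqref{complex:trans:1}--\eqref{complex:trans:2} by rewriting the two interface conditions of \eqref{eqn:main:trans} in complex form.

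First I would establish the representation in each phase separately, using that $\L_{\lambda,\mu}u_e=0$ in $\CC\setminus\overline{\Om}$ and $\L_{\tilde{\lambda},\tilde{\mu}}u_i=0$ in $\Om$. Writing Navier's equation with the Wirtinger operators $\p_z=\tfrac12(\p_1-\rmi\,\p_2)$ and $\p_{\bar z}=\tfrac12(\p_1+\rmi\,\p_2)$, one checks that the dilatation $\n\cdot\bu=2\,\Re(\p_z u)$ is harmonic, hence is the real part of a holomorphic function; integrating this relation and absorbing the factor $2\mu$ (resp. $2\tilde{\mu}$) into the potentials yields holomorphic $f_e,g_e$ in $\CC\setminus\overline{\Om}$ (resp. $f_i,g_i$ in $\Om$) for which $u_e=\kappa f_e-z\,\overline{f_e'}-\overline{g_e}$ and $u_i=\tilde{\kappa}f_i-z\,\overline{f_i'}-\overline{g_i}$, with $\kappa,\tilde{\kappa}$ exactly the stated Lam\'e ratios. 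Since this is a classical fact, I would either cite \cite{Muskhelishvili:1953:SBP,Ammari:2007:PMT} or record only this short sketch; in the exterior phase I would additionally note that $\H$ is Lam\'e-harmonic on all of $\RR^2$ and that the decay $(\bu-\H)(x)=O(|x|^{-1})$, together with single-valuedness of the displacement, fixes the admissible growth of $f_e,g_e$ at infinity.

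With the representation in hand, the displacement interface condition $\bu|^+=\bu|^-$ on $\p\Om$ becomes the pointwise equality $u_e=u_i$ there, and substituting \eqref{complex:ue}--\eqref{complex:ui} gives \eqref{complex:trans:1} immediately; no additive constant appears because the equality holds pointwise.

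The remaining and most delicate step is the traction condition, and it is where I expect the main obstacle. I would first derive the complex form of the conormal derivative, namely the classical identity
\[
\Big(\tfrac{\p\bu}{\p\nu}\Big)_1+\rmi\Big(\tfrac{\p\bu}{\p\nu}\Big)_2=-\rmi\,\frac{d}{ds}\big(\varphi(z)+z\,\overline{\varphi'(z)}+\overline{\psi(z)}\big),
\]
where $s$ is arc length along $\p\Om$, and $\varphi=2\mu f_e,\ \psi=2\mu g_e$ on the exterior side while $\varphi=2\tilde{\mu}f_i,\ \psi=2\tilde{\mu}g_i$ on the interior side. Establishing this identity is the crux: it requires expanding the traction $\lambda(\n\cdot\bu)N+\mu(\n\bu+\n\bu^T)N$ in complex notation, using the relation $N=-\rmi\,t$ between the outward unit normal and the unit tangent $t=dz/ds$, and invoking the Kolosov stress formulas. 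Once it is available, the condition $\tfrac{\p\bu}{\p\nu}\big|^+=\tfrac{\p\bu}{\p\tilde{\nu}}\big|^-$ reduces to the equality of the arc-length derivatives of $\varphi_e+z\,\overline{\varphi_e'}+\overline{\psi_e}$ and $\varphi_i+z\,\overline{\varphi_i'}+\overline{\psi_i}$ along the closed curve $\p\Om$; integrating in $s$ and reinserting $\varphi_e=2\mu f_e$, $\varphi_i=2\tilde{\mu}f_i$, and so on, produces \eqref{complex:trans:2} up to the single integration constant $c$. The only genuine care is the bookkeeping of this constant $c$, which arises intrinsically because the traction determines the potentials only up to an additive constant.
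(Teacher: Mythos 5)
Your proposal is correct and follows essentially the same route the paper relies on: the lemma is quoted from Muskhelishvili and Ammari--Kang, with the representation \eqref{complex:ue}--\eqref{complex:ui} taken as the classical Kolosov--Muskhelishvili decomposition, the displacement condition giving \eqref{complex:trans:1} pointwise, and the traction condition integrating along $\p\Om$ via the complex conormal-derivative identity (the paper's \eqref{Complex_representation_of_solutions_2}) to yield \eqref{complex:trans:2} up to the constant $c$. The only discrepancy is the factor $-\rmi$ versus $-1$ in your traction identity compared with \eqref{Complex_representation_of_solutions_2}, which is a normal/tangent orientation convention that cancels from both sides of the transmission condition and does not affect the conclusion.
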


\smallskip
For convenience, we introduce operators
\beq\label{def:I}
\begin{aligned}
\mathcal{I}^e[u_e](z)&:=\mu\left(f_e(z)+z\overline{f_e'(z)}+\overline{g_e(z)}\right),\\
\mathcal{I}^i[u_i](z)&:=\tilde{\mu}\left(f_i(z)+z\overline{f_i'(z)}+\overline{g_i(z)}\right),
\end{aligned}
\eeq
written by notational abuse as the right-hand side of \eqref{def:I} may differ by some constant depending on the choice of $f_e$ and $g_e$ (or $f_i$ and $g_i$).

It is important to note that equation \eqref{complex:trans:2} is derived from the complex-variable representation of the exterior conormal derivative of $u_e$ (along with an analogous interior formula for $u_i$):
\begin{align}\label{Complex_representation_of_solutions_2}
\left(\partial_{\nu}u_e(z)\right) d\sigma(z)
=-2\mu\,\partial\Big[f_e(z)+z\,\overline{f_e'(z)}+\overline{g_e(z)}\,\Big],
\end{align}
where $d\sigma$ is the line element of $\p\Om$, $\partial=\frac{\partial}{\partial x_1}dx_1+\frac{\partial}{\partial x_2}dx_2=\frac{\p}{\p z}dz+\frac{\p}{\p\overline{z}}d\overline{z}$, and $f'(z)=\frac{\p f}{\p z}(z).$
The interface conditions \eqref{complex:trans:1} and \eqref{complex:trans:2} are equivalent to the following conditions:\beq\label{interface:modified}
u_e=u_i\quad\mbox{and}\quad\mathcal{I}^e[u_e]=\mathcal{I}^i[u_i]+\mbox{constant}. 
\eeq

The complex-valued single-layer potentials and their conormal derivatives associated with $\varphi$ and $\phi$ are defined analogously to \eqref{complex:expression}. For instance,
$$
\begin{aligned}
S_{\p\Om}[\psi](z)&=\big(\Scal_{\p\Om}[\bpsi](z)\big)_1+\rmi \,\big(\Scal_{\p\Om}[\bpsi](z)\big)_2,\\
\partial_{\nu}S_{\p\Om}[\psi](z) &= \big(\dfrac{\partial }{\partial \nu}\Scal_{\p\Om}[\bpsi](z)\big)_1 + \rmi \,\big( \dfrac{\partial }{\partial \nu}\Scal_{\p\Om}[\bpsi](z)\big)_2.
\end{aligned}
$$
With these definitions, the representation formula \eqref{sol:formula1} can be written in complex form as
\begin{align}\label{complex:sol:u}
u(z) = 
\begin{cases}
H(z) + S_{\partial \Om}[\psi](z),&\qquad z\in\mathbb{C}\setminus\overline{\Omega},
\\
\widetilde{S}_{\partial \Omega}[\phi](z),&\qquad z\in \Omega.
\end{cases}
\end{align}

\subsection{Holomorphic function expressions for the single-layer potentials}\label{subsec:CIE}

Similarly to \eqref{complex:ue} and \eqref{complex:ui}, the single-layer potential $S_{\p\Om}$ has a decomposition in terms of two holomorphic functions, each of which admits an explicit boundary-integral representation.
More precisely, as shown in \cite[Theorem 9.20]{Ammari:2007:PMT} (see also \cite[Appendix A]{Ando:2018:SPN}), for any density $\varphi\in L^2(\p\Om)^2$, it follows that
\begin{align}\label{asymptotic_general_single_layer}
2 S_{\p \Om}[\vp](z)=
2\a\,L[\vp](z)-\b z\,\overline{\C[\vp](z)}+\b\,\overline{\C[\overline{\zeta}\vp](z)}-c_\varphi \quad \mbox{for } z\in\Om \cup \CC\setminus\overline{\Om}.
\end{align}
Here, $\alpha$ and $\beta$ are given in \eqnref{def:alpha:beta}, 
$$c_\varphi=\frac{\beta}{2\pi}{\int_{\partial \Om}{\varphi(\zeta)}d\sigma(\zeta)}.$$
The integral operators are defined by
\begin{align}\label{def:C}
\begin{aligned}
L[\vp](z)&=\frac{1}{2\pi}\int_{\p \Om}\ln|z-\zeta|\vp(\zeta)\,d\sigma(\zeta),\\
\C[\vp](z)&=\frac{1}{2\pi}\int_{\p \Om}\frac{\vp(\zeta)}{z-\zeta}\,d\sigma(\zeta),\\
\C[\overline{\zeta}\vp](z)&=\frac{1}{2\pi}\int_{\p \Om}\frac{\vp(\zeta)}{z-\zeta}\,\overline{\zeta}\,d\sigma(\zeta).
\end{aligned}
\end{align}
The operator $L$ corresponds to the single-layer potential for the Laplacian. In particular, as shown in \cite[Theorem 5.1]{Jung:2021:SEL},
\beq\label{L:varphi0}
L[\varphi_0](z)=
\begin{cases}
\ds \ln \gamma,\quad &z\in \Om,\\
\ds \ln |w|,\quad &z\in \CC\setminus\overline{\Om},
\end{cases}
\eeq
where $\gamma>0 $ is the conformal radius of $\Om$, and $\varphi_0$ is defined in \eqref{exterior_basis}. 
Note that $L[\varphi_0](z)$ cannot be the real part of a holomorphic function defined throughout $\CC\setminus\overline{\Om}$.

\begin{definition}\label{def:SL:holomorphic}
For  $\varphi\in L^2(\p\Om)^2$ and $z\in \CC\setminus\p\Om$, we define
\begin{align}
\label{def:L}
\begin{aligned}
\L[\vp](z)&:=
\begin{cases}
\ds\frac{1}{2\pi}\int_{\p \Om}\log(z-\zeta)\vp(\zeta)\,d\sigma(\zeta)\quad&\mbox{if }\int_{\p\Om}\vp\,d\sigma=0,\\[3mm]
 \ds c\,\ln \gamma\quad&\mbox{if }\vp\equiv c\,\varphi_0,\ z\in\Om,
\end{cases}
\end{aligned}
\end{align}
where $c$ is an arbitrary constant. 
\end{definition}
If $\int_{\p\Om} \varphi \,d\sigma=0$, then $\L[\vp](z)$ is well-defined, by taking a proper branch cut of $\log(z-\zeta)$, and holomorphic in $\Om\cup(\CC\setminus\overline{\Om})$. If instead $\varphi=c\,\varphi_0$ for some constant $c$, then $\L[\vp](z)$  is constant, and thus trivially holomorphic.
By combining Definition~\ref{def:SL:holomorphic} with \eqref{L:varphi0}, one obtains
 \beq\label{L:calL}
L[\vp]=\frac{1}{2}\big(\L[\vp]+\overline{\L[\overline{\vp}]}\big),
\eeq
which holds in $\Om$ for all $\vp\in L^2(\p\Om)^2$. 
Moreover, if $\int_{\p\Om}\vp d\sigma=0$, this identity extends to $\Om\cup\CC\setminus\overline{\Om}$, and $$\L[\vp]'(z):=\frac{\p}{\p z}\L[\vp](z)=\C[\vp](z),\quad z\in \Om\cup(\CC\setminus\overline{\Om}).$$
In addition, the continuity of the Laplacian single-layer potential $L[\varphi]$ implies the following continuity along the interface: 
\beq\label{prop:C2:main:c}
\big(\L[\vp]+\overline{\L[\overline{\vp}]}\big)\Big|_{\p\Om}^+ = \big(\L[\vp]+\overline{\L[\overline{\vp}]}\big)\Big|_{\p\Om}^-,
\eeq
again assuming $\int_{\p\Om}\vp\, d\sigma=0$.

The relations \eqnref{asymptotic_general_single_layer} and \eqref{L:calL} (together with $\kappa\beta=\alpha$) lead the following lemma. 
\begin{lemma} [\cite{Ammari:2007:PMT,Ando:2018:SPN}]\label{lemma:S:complex:all}
For $\vp\in L^2(\Om)^2$ and $z\in \Om$ (or if $\int_{\p\Om}\vp\,d\sigma=0$ and $z\in \CC\setminus\overline{\Om}$), we have
\beq\label{S:complex:all}
\begin{aligned}
\ds 2 S_{\p \Om}[\varphi] (z)&=\kappa f(z)-z\overline{f'(z)}-\overline{g(z)}-c_\varphi,\\
\ds	f(z)&=\b\,\L[\vp](z),\\
\ds g(z)&=-\a\,\L[\overline{\vp}](z)-\b\,\C[\overline{\zeta}\vp](z)
\end{aligned}
\eeq
with $c_\varphi=\frac{\beta}{2\pi}{\int_{\partial \Om}{\varphi(\zeta)}d\sigma(\zeta)}$, and $f(z)$ and $g(z)$ are holomorphic.
\end{lemma}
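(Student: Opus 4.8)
The plan is to obtain the claimed representation directly from the established decomposition \eqref{asymptotic_general_single_layer}, converting each real-valued potential there into a holomorphic object by means of \eqref{L:calL}. Recall that \eqref{asymptotic_general_single_layer} writes $2S_{\p\Om}[\vp]$ in terms of the Laplacian potential $L[\vp]$, the Cauchy transform $\C[\vp]$, the weighted transform $\C[\overline{\zeta}\vp]$, and the constant $c_\varphi$. The first step is to substitute $L[\vp]=\tfrac12\big(\L[\vp]+\overline{\L[\overline{\vp}]}\big)$ into the leading term $2\a\,L[\vp]$, producing $\a\,\L[\vp]+\a\,\overline{\L[\overline{\vp}]}$, that is, a genuinely holomorphic piece together with the conjugate of a holomorphic piece.

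Next I would introduce the candidate functions $f(z)=\b\,\L[\vp](z)$ and $g(z)=-\a\,\L[\overline{\vp}](z)-\b\,\C[\overline{\zeta}\vp](z)$ and verify the identity \eqref{S:complex:all} term by term. For the antiholomorphic term $-\b z\,\overline{\C[\vp]}$ the key input is the derivative relation $\L[\vp]'=\C[\vp]$ recorded after \eqref{L:calL}: it gives $f'=\b\,\C[\vp]$ and hence, since $\b$ is real, $-\b z\,\overline{\C[\vp]}=-z\,\overline{f'}$, matching the middle term of the target. The remaining holomorphic term is handled by the algebraic relation $\kappa\b=\a$, which turns $\a\,\L[\vp]$ into $\kappa f$. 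Finally, the two conjugate contributions combine as $\a\,\overline{\L[\overline{\vp}]}+\b\,\overline{\C[\overline{\zeta}\vp]}=-\overline{g}$, so that $2S_{\p\Om}[\vp]=\kappa f-z\,\overline{f'}-\overline{g}-c_\varphi$, as asserted. This algebra is essentially forced once \eqref{L:calL} and $\kappa\b=\a$ are in hand.

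It then remains to justify holomorphicity of $f$ and $g$ and to delineate the two regimes in the hypothesis. By Definition~\ref{def:SL:holomorphic}, whenever $\int_{\p\Om}\vp\,d\sigma=0$ the function $\L[\vp]$ is holomorphic on $\Om\cup(\CC\setminus\overline{\Om})$ after a suitable branch cut of $\log(z-\zeta)$, and likewise for $\L[\overline{\vp}]$ and $\C[\overline{\zeta}\vp]$; hence $f$ and $g$ are holomorphic throughout. For a general density $\vp$ of nonzero mean, however, \eqref{L:calL} is only guaranteed in $\Om$—this is exactly the obstruction flagged after \eqref{L:varphi0}, namely that $L[\varphi_0]$ cannot be realized as the real part of a single holomorphic function on $\CC\setminus\overline{\Om}$ (it equals $\ln\gamma$ inside and $\ln|w|$ outside)—so the representation is asserted only for $z\in\Om$ in that case. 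This case split is the genuinely delicate point.

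I expect the main obstacle to lie in the bookkeeping of the branch-cut and mean-value issues rather than in the algebra. Concretely, for a general $\vp$ one should confirm that the additive constant ambiguity introduced by the $\varphi_0$-component is exactly absorbed into $c_\varphi$ (consistent with the notational abuse already flagged for $\mathcal{I}^e$ and $\mathcal{I}^i$), so that the interior identity holds without spurious constants, while in the exterior the zero-mean hypothesis restores global holomorphicity and legitimizes the use of \eqref{L:calL} on all of $\CC\setminus\overline{\Om}$.
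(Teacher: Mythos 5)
Your proposal is correct and follows exactly the route the paper intends: it substitutes \eqref{L:calL} into \eqref{asymptotic_general_single_layer}, uses $\L[\vp]'=\C[\vp]$ and $\kappa\beta=\alpha$ to identify the terms with $\kappa f - z\overline{f'}-\overline{g}-c_\varphi$, and handles the interior/exterior case split via the zero-mean condition, which is precisely the one-line derivation the paper sketches before stating the lemma.
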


Applying Lemma \ref{lemma:S:complex:all} along with the fact that  $\int_{\p\Om}\psi\,d\sigma=0$, we derive the following:\begin{lemma}\label{lemma:Single:layer}
Let $\psi$ and $\phi$ be the density functions from \eqref{complex:sol:u}. Then, for $z\in\p\Om$, we have
\begin{align}\notag
\begin{cases}
\ds
2 S_{\p \Om}[\psi]\Big|^+(z)=
\a\left(\mathcal{L}[\psi](z)+\a\,\overline{\mathcal{L}[\overline{\psi}](z)}\right)+ \beta\left(-z\,\overline{\C[\psi](z)}+\,\overline{\C[\overline{\zeta}\psi](z)}\right),\\[2mm]
\ds \mathcal{I}^e\big[2S_{\partial \Omega}[\psi](z)\big]\Big|^+(z)
= \mu\left(\b\,\mathcal{L}[\psi](z)-\a\,\overline{\mathcal{L}[\overline{\psi}](z)}-\b \left(-z\,\overline{\C[\psi](z)}+\,\overline{\C[\overline{\zeta}\psi](z)}\right)\right)
\end{cases}
\end{align}
and
\begin{align}\notag
\begin{cases}
\ds
2 \widetilde{S}_{\p \Om}[\phi]\Big|^-(z)=
\ta\left(\mathcal{L}[\phi](z)+\overline{\mathcal{L}[\overline{\phi}](z)}\right)+\tb \left(-z\,\overline{\C[\phi](z)}+\,\overline{\C[\overline{\zeta}\phi](z)}\right)-c_\phi,\\[2mm]
\ds \mathcal{I}^i\big[2\widetilde{S}_{\partial \Omega}[\phi]\big]\Big|^-(z)
= \tmu\left(\tb\,\mathcal{L}[\phi](z)-\ta\,\overline{\mathcal{L}[\overline{\phi}](z)}-\tb \left(-z\,\overline{\C[\phi](z)}+\,\overline{\C[\overline{\zeta}\phi](z)}\right)\right),
\end{cases}
\end{align}
where $c_\phi=\frac{\tilde{\beta}}{2\pi}{\int_{\partial \Om}{\phi(\zeta)}d\sigma(\zeta)}$ and the single-layer potential are scaled by a factor of $2$, consistent with the expression in Lemma~\ref{lemma:S:complex:all}. 
The operators $\mathcal{I}^e$ and $\mathcal{I}^i$ are defined in \eqref{def:I} with unspecified additive constants. 
\end{lemma}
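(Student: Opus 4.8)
The plan is to obtain all four identities as direct consequences of Lemma~\ref{lemma:S:complex:all} and the definition \eqref{def:I} of the operators $\mathcal{I}^e,\mathcal{I}^i$, with the only real work being algebraic simplification via the relations $\L[\vp]'=\C[\vp]$ and $\kappa\b=\a$ (together with the interior analogue $\tilde{\kappa}\,\tb=\ta$).

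First I would treat the exterior density $\psi$. Since $\psi$ is the density in the representation \eqref{complex:sol:u} of the exterior solution, it satisfies $\int_{\p\Om}\psi\,d\sigma=0$, so the constant $c_\psi$ vanishes and the exterior branch of Lemma~\ref{lemma:S:complex:all} is valid up to the boundary; taking the limit from outside gives
\[
2S_{\p\Om}[\psi]\big|^+=\kappa f-z\,\overline{f'}-\overline{g},\qquad f=\b\,\L[\psi],\ \ g=-\a\,\L[\overline{\psi}]-\b\,\C[\overline{\zeta}\psi].
\]
Substituting these expressions and using $f'=\b\,\C[\psi]$ together with $\kappa\b=\a$ collapses the holomorphic part to $\a\,\L[\psi]$; combining this with the $\overline{g}$ contribution $\a\,\overline{\L[\overline{\psi}]}$ and the $-z\,\overline{f'}$ contribution produces the remaining term $\b\big(-z\,\overline{\C[\psi]}+\overline{\C[\overline{\zeta}\psi]}\big)$, which is the first displayed identity.

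For the second identity I would apply \eqref{def:I} directly. Since the decomposition of $2S_{\p\Om}[\psi]$ is precisely of the Muskhelishvili form $\kappa f-z\,\overline{f'}-\overline{g}$ with the same $\kappa$ appearing in \eqref{complex:ue}, the holomorphic representatives of $u_e=2S_{\p\Om}[\psi]$ may be taken to be the very $f$ and $g$ above, so that $\mathcal{I}^e\big[2S_{\p\Om}[\psi]\big]=\mu\big(f+z\,\overline{f'}+\overline{g}\big)$. Inserting $f,g$ and simplifying (again via $f'=\b\,\C[\psi]$) yields the claimed formula, the sign pattern $-\b\big(-z\,\overline{\C[\psi]}+\overline{\C[\overline{\zeta}\psi]}\big)$ arising from the $+\overline{g}$ term. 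The interior identities for $\phi$ and $\widetilde S_{\p\Om}$ then follow by the identical computation with $(\a,\b,\kappa,\mu)$ replaced by $(\ta,\tb,\tilde{\kappa},\tmu)$ and the one-sided limit taken from inside; the only structural difference is that $\phi$ need not have vanishing mean, so the constant $c_\phi=\tfrac{\tb}{2\pi}\int_{\p\Om}\phi\,d\sigma$ survives in the expression for $2\widetilde S_{\p\Om}[\phi]$.

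The one point requiring care — and the main, if modest, obstacle — is justifying the reuse of the same pair $(f,g)$ both in the boundary value of $2S_{\p\Om}[\psi]$ and in $\mathcal{I}^e\big[2S_{\p\Om}[\psi]\big]$. This rests on the Muskhelishvili decomposition being unique up to the rigid-motion ambiguity already absorbed into the ``unspecified additive constant'' noted for \eqref{def:I}, so the choice $f_e=f,\ g_e=g$ supplied by Lemma~\ref{lemma:S:complex:all} is admissible and alters $\mathcal{I}^e$ only through that constant. I would also record that the exterior formulas hold precisely because $\int_{\p\Om}\psi\,d\sigma=0$ makes $\L[\psi]$ holomorphic (with $\L'=\C$) throughout $\CC\setminus\overline{\Om}$ up to $\p\Om$, whereas for the interior density no such hypothesis is needed, since Lemma~\ref{lemma:S:complex:all} is valid in $\Om$ for all $\phi$.
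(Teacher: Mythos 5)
Your proof is correct and follows the same route the paper intends: the lemma is a direct consequence of Lemma~\ref{lemma:S:complex:all} combined with $\L[\vp]'=\C[\vp]$, the relation $\kappa\b=\a$ (and its tilded analogue $\tilde{\kappa}\tb=\ta$), the vanishing of $\int_{\p\Om}\psi\,d\sigma$ for $\psi\in L^2_{\mathcal{R}}(\p\Om)$, and the observation that the additive-constant ambiguity in $\mathcal{I}^e,\mathcal{I}^i$ absorbs the non-uniqueness of the Muskhelishvili pair $(f,g)$. Note that your computation yields $\a\bigl(\L[\psi]+\overline{\L[\overline{\psi}]}\bigr)$ in the first identity, which is the correct expression; the extra factor $\a$ multiplying $\overline{\L[\overline{\psi}]}$ in the printed statement is a typographical error, as confirmed by the interior analogue and by the expansion used in the proof of Theorem~\ref{Thm:Mat:Se}.
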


\section{Exterior conformal mapping, Faber polynomials and their associated matrices}\label{subsec:pre2}

In this section, we introduce the exterior conformal mapping and its corresponding Faber polynomials of the domain $\Om$. We also define the associated matrices that will be used to express the transmission problem \eqref{sol:formula1} in matrix form.

\subsection{Exterior conformal mapping}
As we assume $\Om$ is simply connected and bounded, by the Riemann mapping theorem, there exists a unique constant $\gamma>0$ and a conformal mapping from $\{w\in\CC:|w|>\gamma\}$ onto $\mathbb{C}\setminus\overline{\Omega}$ with $\Psi(\infty)=\infty$ and $\Psi'(\infty)=1$ (see, for instance, \cite[Chapter 1.2]{Pommerenke:1992:BBC}). 
This mapping admits the Laurent series expansion:
\beq\label{eqn:extmapping} 
\Psi(w)=w+{a}_0+\frac{{a}_1}{w}+\frac{{a}_2}{w^2}+\cdots,\quad |w|>\gamma.
\eeq 
For notational convenience, we set $a_{-1}=1$. 
The constant $\gamma$ is called the {\it conformal radius} of $\Om$. We set $\rho_0=\ln \gamma.$
The map $\Psi$ admits a continuous extension to $\p\Om$ by the Carath\'eodory extension theorem \cite{Caratheodory:1913:GBR} and, furthermore, $\Psi'$ has a continuous extension to $\p\Om$ for $\Om$ with a $C^{1,\alpha}$ boundary by the Kellogg--Warschawski theorem \cite{Pommerenke:1992:BBC}.

We assume that $\Om$ has an analytic boundary, meaning $\Psi$ can be conformally extended to $|w|>\gamma-\delta$ for some $\delta>0$. Consequently, the Grunsky coefficients satisfy sharper bounds than those in \eqref{c:bound}, with $\gamma$ replaced by $\gamma-\delta$.

Using the exterior conformal mapping $\Psi$ associated with $\Om$, 
  we introduce modified polar coordinates $(\rho,\theta)\in[\rho_0,\infty)\times[0,2\pi)$ for $z\in\CC\setminus\overline{\Om}$ via the conformal map
 $$z=\Psi(w),\quad w=e^{\rho+\mathrm{i}\theta}.$$
To simplify the notation, we write $$\Psi(\rho,\theta):=\Psi(e^{\rho+\mathrm{i}\theta}).$$
The scaling factor associated with this mapping,
$
h(\rho, \theta) := \left|\frac{\partial \Psi} {\partial \rho}\right| = \left|\frac{\partial \Psi} {\partial \theta}\right|,
$
is the same in both the $\rho$- and $\theta$-directions. The arc-length element on the boundary $\p\Om$ (corresponding to $\rho=\rho_0$) is given by
\beq\label{dsigma}
d\sigma(z)=h(\rho_0,\theta)d\theta \quad \mbox{on } \p\Om.
\eeq

We define the complex-valued basis functions
\begin{equation}\label{exterior_basis}
\varphi_k(z)=\frac{\mathrm{e}^{\mathrm{i}k\theta}}{h(\rho_0,\theta)}\quad\mbox{for }k\in\ZZ,
\end{equation}
where $z=\Psi(\rho_0,\theta)$. The collection $\{\varphi_k\}_{k\in\ZZ}$ forms an orthogonal system in $L^2(\p\Om,d\sigma)$, providing a convenient basis for expanding density functions on $\p\Om$.

\subsection{Faber polynomials}
The exterior conformal mapping $\Psi$ defines the so-called {\it Faber polynomials} $F_m(z)$ via the generating relation \cite{Faber:1903:UPE}:
\begin{equation}\label{eqn:Fabergenerating}
\frac{w\Psi'(w)}{\Psi(w)-z}=\sum_{m=0}^{\infty}F_m(z)w^{-m}\quad \mbox{for }|w|>\gamma,\ z\in\overline{\Om}.
\end{equation}
As a consequence, the complex logarithmic functions satisfy with appropriate branch cuts that
\beq\label{Faber:log:expan}
\log(\Psi(w)-z)=\log w -\sum_{m=1}^\infty \frac{1}{m} F_m(z) w^{-m}\quad \mbox{for }|w|>\gamma,\ z\in\overline{\Om}.
\eeq
Notably, $F_m(\Psi(w))$ has only a single positive order term as 
\begin{equation}\label{eqn:Faberdefinition}
F_m(\Psi(w))
=w^m+\sum_{k=1}^{\infty}c_{mk}{w^{-k}},
\end{equation}
where $c_{mk}$ is called the \textit{Grunsky coefficient}. 
It holds  for all $m,k\geq 1$ that $k c_{mk}=m c_{km}$ \cite{Grunsky:1939:KSA,Duren:1983:UF} and (see, for instance,\cite{Mattei:2021:EAS})
\beq\label{c:bound}
\left|c_{mk}\right|\leq 2 m \gamma^{m+k}.
\eeq

The Faber polynomials can be determined from the coefficients $\{ a_n \}_{0 \leq n \leq m-1}$ via the recursion relation \cite{Grunsky:1939:KSA,Duren:1983:UF}:
\beq\label{Faberrecursion}
F_{m+1} (z) = z F_m (z) - m a_m -\sum_{n=0}^{m}  a_n F_{m-n} (z), \quad m\ge 0.
\eeq
For example, the first few Faber polynomials are
\beq\label{Faber:examples}
\begin{aligned}
F_0(z)&=1,\quad
F_1(z)=z-a_0,\quad
F_2(z)=z^2-2a_0 z+a_0^2-2a_1,\\
F_3(z)& = z^3 - 3a_0z^2 + 3(a_0^2-a_1)z - a_0^3+3a_0a_1-3a_2.
\end{aligned}
\eeq

\subsection{Matrices associated with the conformal mapping and Faber polynomials}\label{subsec:matrices}

 All matrices considered here are semi-infinite, indexed from $0$ to $\infty$, are of the form $[x_{mn}]_{m,n=0}^\infty$. 
We begin by defining the following diagonal matrices:
\beq\label{Mat:D}
\begin{aligned}
{\mathcal{N}}^{\pm 1}&=\text{diag}\,\big(1, \, 1, \, 2^{\pm 1}, \, 3^{\pm 1}, \, \cdots\big),\\
\mathcal{N}^{\pm 1}_0&=\text{diag}\,\big(0, \, 1, \, 2^{\pm 1}, \, 3^{\pm 1}, \, \cdots\big),\\
I_0 &=  \text{diag}\,\big(0, \, 1, \, 1, \, 1,\,\cdots\big) 
\end{aligned}
\eeq
and, for any $s>0$ and $k\in\NN$, 
\beq\label{Mat:D2}
\begin{aligned}
s^{k\mathcal{N}}&=\text{diag}\,\big(1,\, s^{k},\, s^{2k},\,s^{3k},\, \cdots\big),\\
s^{k\mathcal{N}}_0&=\text{diag}\,\big(0,\, s^{k},\, s^{2k},\, s^{3k},\, \cdots\big).
\end{aligned}
\eeq
We also define
\beq\label{def:mat:T}
{T}:=\left[ (n+1)\delta_{m}^{n+1}\right]_{m,n=0}^\infty=
\begin{bmatrix}
0& 0 & \hskip 1mm 0 \hskip 1mm &\hskip 1mm 0  \  \cdots\\[1mm]
1 &0 & 0 & 0   \  \cdots\\[1mm]
0 &2& 0 & 0  \ \cdots\\[1mm]
0&0& 3 & 0 \  \cdots\\[1mm]
\vdots & \vdots & \vdots & \vdots\ \ddots 
\end{bmatrix},
\eeq
In the following, we define matrices determined by the exterior conformal mapping $\Psi$ and the Faber polynomials $F_m$. 

\paragraph{Matrices associated with $\Psi$.}
We denote by $a_m$ the coefficients of the conformal mapping $\Psi$ as in \eqref{eqn:extmapping}, adopting the convention that $$a_{-1}=1,\quad a_{-n}=0\quad\mbox{for all }n\geq 2.$$
With a slight abuse of notation, we use $\Psi_+$, $\Psi_-$, and $\Psi_0$ to denote the semi-infinite matrices indexed by $m,n \geq 0 $, defined as follows:
\beq\label{def:Psi:mat}
\begin{aligned}
&[\Psi_+]_{mn}=a_{m+n},\\
& [\Psi_-]_{mn}=a_{m-n},\\
&[\Psi_0]_{mn}=
\begin{cases}
a_0\quad & \mbox{if }(m,n) = (0,0),\\
a_{-1} \quad & \mbox{if }(m,n)=(1,0), (0,1),\\
0&\mbox{otherwise}.
\end{cases}
\end{aligned}
\eeq
Here, $[\cdot]_{mn}$ denotes the $mn$-th entry of a semi-infinite matrix. 

\smallskip
\smallskip

\paragraph{Grunsky coefficient matrix $C$.}
Recall that $c_{mn}$ are the Grunsky coefficients.
For notational convenience, we set $$c_{0n}=c_{m0}=0\quad\mbox{for all }m,n\geq 0.$$ We define the matrix $C$ by
\beq\label{def:mat:C}
{C}:=\left[c_{mn}\right]_{m,n=0}^\infty=
\begin{bmatrix}
0& 0 & \hskip 1mm 0 \hskip 1mm &\hskip 1mm 0  \  \cdots\\[1mm]
0 &c_{11} & c_{12}& c_{13}  \  \cdots\\[1mm]
0 &c_{21}& c_{22} & c_{23}  \ \cdots\\[1mm]
0&c_{31}& c_{32} & c_{33} \ \cdots\\[1mm]
\vdots & \vdots & \vdots & \vdots\ \ddots 
\end{bmatrix}.
\eeq

\paragraph{Expansion coefficient matrix $P$ for $F_m$.} 
The $m$-th Faber polynomial has the form
\beq\label{FaberP}
F_m(z) =\sum_{n=0}^{m} p_{mn} z^n,\quad m\geq 0,
\eeq
where each coefficient $\{ p_{mn} \}_{0\leq n \leq m}$ depends only on the coefficients $\{ a_n \}_{0 \leq n \leq m-1}$. From the recurrence relation \eqref{Faberrecursion}, it follows that $$p_{mm} = 1,\quad p_{(m+1)m} = - (m+1) a_0\mbox{ for }m\ge0,$$ and 
\begin{align*}
p_{(m+1)0} &= -ma_m - { \sum_{k=0}^m a_{m-k}}\, p_{k0}\quad\mbox{for }m\ge0,\\
p_{(m+1)n} &= p_{m(n-1)} - {\sum_{k=n}^m a_{m-k}} \, p_{kn}\quad\mbox{for }1\leq n \leq m.
\end{align*}

Let $P$ denote the matrix of the coefficients $p_{mn}$ in \eqref{FaberP}. It takes the form
\beq\label{def:matP}
{P}:=\left[p_{mn}\right]_{m,n=0}^\infty=
\begin{bmatrix}
1 & 0 & \hskip 4mm 0 \hskip 4mm &  \hskip 4mm 0\quad \cdots\\[4mm]
-a_0 &1 & 0 &  \hskip 4mm0   \quad  \cdots\\[4mm]
a_0^2-2a_1 &-2a_0 & 1 & \hskip 4mm 0  \quad \cdots\\[4mm]
-a_0^3+3a_0a_1-3a_2&3a_0^2-3a_1 & -3a_ 0 &  \hskip 4mm1 \quad \cdots\\[4mm]
\vdots & \vdots & \vdots & \hskip 4mm \vdots\quad\ddots 
\end{bmatrix}.
\eeq

For any $M\geq 0$, the finite section $P_M:=[p_{mn}]_{m,n=0}^M$ is a $(M+1)\times(M+1)$ lower triangular matrix with ones on the diagonal, and is therefore invertible.
Moreover, the entries of $P_M^{-1}$ remain unchanged for all larger $M$. 
We thus define $P^{-1}$ as the semi-infinite matrix whose entries are the entries of $P_M^{-1}$.

\paragraph{Expansion coefficient matrix $\widetilde{D}$ for $F'_m$.} 
The derivatives of the Faber polynomial $F_m(z)$, which are polynomials of degree $m-1$, can be expanded into $F_0(z),\dots,F_{m-1}(z)$. In other words, the derivatives admit the expression:
\begin{align}
\label{Faber:deriv:coeffi}
F_m'(z) &= \sum_{k=0}^{m-1} \widetilde{d}_{m k}\, F_k(z)\quad\mbox{for }m\geq 0
\end{align}
with some coefficients $\widetilde{d}_{mk}=\widetilde{d}_{mk}(a_0,\dots,a_{m-2})$. 
For instance, $F_0'(z)=0=\widetilde{d}_{00} \, F_0(z)$ and $F_1'(z)=1=d_{10}\, F_0(z)$ where $\widetilde{d}_{00}=0$ and $\widetilde{d}_{10}=1$. For notational convenience, we set $$\widetilde{d}_{mk}=0\quad\mbox{for all }k\geq m.$$
We now introduce the matrix
\beq\label{def:mat:d}
\widetilde{ D}:=\left[\widetilde{d}_{mn}\right]_{m,n=0}^\infty=
\begin{bmatrix}
0& 0 & \hskip 1mm 0 \hskip 1mm &\hskip 1mm 0  \  \cdots\\[1mm]
1 &0 & 0 & 0   \  \cdots\\[1mm]
-2a_0 &2& 0 & 0  \ \cdots\\[1mm]
3a_1&0& 3 & 0 \ \cdots\\[1mm]
\vdots & \vdots & \vdots & \vdots\ \ddots 
\end{bmatrix}.
\eeq

For later use, we also set
$$
d_{mn}:=
 \begin{cases}
 \ds\frac{\widetilde{d}_{mn}}{m\gamma^m}\quad &\mbox{for }m\neq 0,\\
\ds 0 &\mbox{for }m=0
\end{cases}
$$
and
\beq\label{def:mat:D}
{ D}:=\left[  d_{mn}\right]_{m,n=0}^\infty=
\begin{bmatrix}
0& 0 & \hskip 1mm 0 \hskip 1mm &\hskip 1mm 0  \  \cdots\\[1mm]
d_{10} &0 & 0 & 0   \  \cdots\\[1mm]
d_{20}&d_{21}& 0 & 0  \ \cdots\\[1mm]
d_{30}&d_{31}& d_{32}& 0 \  \cdots\\[1mm]
\vdots & \vdots & \vdots & \vdots\ \ddots 
\end{bmatrix}
\eeq

\begin{lemma}\label{d_mj}
The matrices $ P$ and $\widetilde{ D}$ (see \eqref{def:matP} and \eqnref{def:mat:d}) satisfy $$
\widetilde{ D}= { PTP}^{-1}
$$
with the matrix $T$ given in \eqref{def:mat:T}.
\end{lemma}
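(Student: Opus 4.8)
The plan is to recognize all three matrices as representations of the single differentiation operator $d/dz$ acting on polynomials, expressed in two bases that are related precisely by $P$. To set this up I would introduce the formal column vectors $\mathbf{z}=(1,z,z^2,\dots)^{\T}$ and $\mathbf{F}=(F_0(z),F_1(z),\dots)^{\T}$. By the definition \eqref{FaberP} of the $p_{mn}$ we have $F_m(z)=\sum_{n=0}^{m}p_{mn}z^n$, i.e. $\mathbf{F}=P\mathbf{z}$. Reading off the entries of $T$ in \eqref{def:mat:T}, the only nonzero entries are $T_{m,m-1}=m$, so $(T\mathbf{z})_m=mz^{m-1}=\frac{d}{dz}z^m$; thus $T$ is the matrix of $d/dz$ in the monomial basis. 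Finally, the expansion \eqref{Faber:deriv:coeffi} reads exactly $(\widetilde{D}\mathbf{F})_m=\sum_k\widetilde{d}_{mk}F_k(z)=F_m'(z)$, so $\widetilde{D}$ is the matrix of $d/dz$ in the Faber basis.

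With this dictionary the identity is a change-of-basis computation. Differentiating $\mathbf{F}=P\mathbf{z}$ termwise (legitimate, since each component is a finite sum and the entries of $P$ are constants) gives $\frac{d}{dz}\mathbf{F}=P\,\frac{d}{dz}\mathbf{z}=PT\mathbf{z}$. Since $P$ is lower triangular with unit diagonal it is invertible, with lower-triangular inverse $P^{-1}$ (the paper's finite-section construction yields a genuine two-sided inverse), so $\mathbf{z}=P^{-1}\mathbf{F}$. Substituting, $\frac{d}{dz}\mathbf{F}=PTP^{-1}\mathbf{F}$. Comparing with $\frac{d}{dz}\mathbf{F}=\widetilde{D}\mathbf{F}$ yields $\widetilde{D}\mathbf{F}=PTP^{-1}\mathbf{F}$ as an identity of polynomial-valued vectors.

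To pass from this vector identity to equality of the matrices themselves, I would extract the $m$-th component: $\sum_k\widetilde{d}_{mk}F_k(z)=\sum_k(PTP^{-1})_{mk}F_k(z)$ for every $z$. Because $\deg F_k=k$, the Faber polynomials $\{F_k\}$ are linearly independent, so the coefficients must agree, giving $\widetilde{d}_{mk}=(PTP^{-1})_{mk}$ for all $m,k$, which is the claim.

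I expect the only delicate point — and the main, though modest, obstacle — to be the bookkeeping for semi-infinite matrices: I must confirm that each of the products $PT$, $TP^{-1}$, $PTP^{-1}$ is well defined and that the associativity and termwise-differentiation steps are valid. This is guaranteed by the triangular structure: $T$ is a single subdiagonal and $P,P^{-1}$ are lower triangular, so every entry of every product is a finite sum and no convergence question arises. As a consistency check, the empty $0$-th row of $T$ forces the $0$-th row of $PTP^{-1}$ to vanish, matching $F_0'=0$ and the vanishing first row of $\widetilde{D}$.
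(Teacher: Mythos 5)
Your proof is correct and is essentially the paper's own argument in operator-theoretic dress: both compare the two expansions of $F_m'(z)$ — one obtained by differentiating $F_m(z)=\sum_n p_{mn}z^n$ termwise (giving the rows of $PT$) and one from the definition $F_m'=\sum_k \widetilde{d}_{mk}F_k$ (giving the rows of $\widetilde{D}P$) — and conclude $PT=\widetilde{D}P$ by linear independence, hence $\widetilde{D}=PTP^{-1}$. Your remarks on the triangular structure guaranteeing that all products are entrywise finite sums are accurate and match the paper's implicit use of the finite-section inverse.
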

\begin{proof}
Note that \eqref{FaberP} and \eqref{Faber:deriv:coeffi} lead to the two formulas:
\begin{align*}
F_m'(z) &= \sum_{n=0}^{m-1} \left( \sum_{k=n}^{m-1} \widetilde{d}_{mk} \, p_{kn} \right) z^n, \quad
F_m'(z) = \sum_{n=0}^{m-1} (n+1) p_{m(n+1)} z^n.
\end{align*}
By comparing the coefficients, we derive that
$$
\left[{PT}\right]_{mn}=(n+1) p_{m(n+1)} = \sum_{k=n}^{m-1} \widetilde{d}_{mk}\, p_{kn} = \sum_{k=0}^\infty \widetilde{d}_{mk}\, p_{kn} = \left[\widetilde{ D} {P}\right]_{mn}\quad \mbox{for all } m,n.
$$
This proves the lemma.
\end{proof}

\smallskip

In the following sections, we use the coordinate system via $z=\Psi(w)$, as introduced in Section \ref{subsec:pre2}. In particular, on the boundary $\p\Om$, we parametrize the point $z\in \p \Om$ as $z=\Psi(w)\in\p\Om$, where $|w|=\gamma$. 
As mentioned earlier, we assume that $\Om$ has an analytic boundary. That is, the associated exterior conformal mapping $\Psi$ admits a conformal extension to the region  $|w|>\gamma-\delta$ for some $\delta>0$.
This allows us to employ the series expansion of Faber polynomials in powers of $w^{\pm n}$ in a neighborhood of the boundary $\p\Om$ within the interior as well as in the exterior of $\Om$.

\section{Series expansions for the background loading $H$}\label{sec:deri:H}
Let us apply the decomposition \eqnref{complex:ui} to the background field $H$.
Since the Faber polynomials form a basis for complex analytic functions, the function $H$ has an expansion in an open neighborhood of $\overline{\Om}$ as the following series without the constant term:
\begin{gather}\label{H:sum}
H(z)=\sum_{m=1}^\infty H_m(z),  \
\intertext{where each term $H_m(z)$ is given by}
H_m(z)
=\kappa A_m F_m(z) - z\overline{A_m F_m'(z)} + \overline{B_m F_m(z)}
\end{gather}
with complex coefficients $A_m$ and $B_m$.
In our analysis, constant background solutions are omitted.
To simplify notation, we introduce the following semi-infinite diagonal matrices indexed from $0$ to $\infty$:
\begin{align}\label{Mat:AB}
\begin{aligned}
{A}&=\text{diag} \big(0, \, A_1,\, A_2,\, A_3,\,\cdots),\qquad
{B}=\text{diag} \big(0, \, B_1,\, B_2,\, B_3,\,\cdots).
\end{aligned}
\end{align}

We can expand $H(z)$ in powers of $w^{\pm n}$ as follows. 
\begin{theorem}\label{Thm:Mat:H}
Let $H$ be given by \eqref{H:sum} with the diagonal coefficient matrix $A$ and $B$. Let the operator $\mathcal{I}^e$ be given as in \eqref{def:I}. 
For $z=\Psi(w)\in\p\Om$, we have
\begin{align}\label{eqn:H_m:mat1}
\begin{cases}
\ds H(z)=\sum_{k=1}^\infty\, [\bm{h}^{(1)}]_{k} \, w^k+\sum_{k=0}^\infty\, [\bm{h}^{(2)}]_{k}\, w^{-k},\\
\ds \mathcal{I}^e[H](z)=\sum_{k=1}^\infty \, [\bm{h}^{(3)}]_{k}\, w^k+\sum_{k=0}^\infty\, [\bm{h}^{(4)}]_{k}\, w^{-k},
\end{cases}
\end{align}
where $\bm{h}^{(j)}$, $j=1,\dots,4$, are row vectors given by 
$$ [\bm{h}^{(j)}]_{k}=\sum_{m=1}^\infty\, [{\mathbb{H}}^{(j)}]_{mk}\quad\mbox{for each }k=0,1,2,\dots$$
and ${\mathbb{H}}^{(j)}$ are infinite matrices given by
\beq\label{eqn:H_m:mat2}
\begin{aligned}
{\mathbb{H}}^{(1)}&=\kappa A - \overline{A}\,\mathcal{N}\gamma^{\mathcal{N}}\,\overline{D} \,\big( \gamma^{2\mathcal{N}}{\Psi_{0}} +\overline{C}\,\gamma^{-2\mathcal{N}}{\Psi_{-}}\big)I_0
+ \overline{B}\,\overline{C}\gamma^{-2\mathcal{N}},\\[1mm]
{\mathbb{H}}^{(2)}&=\kappa AC - \overline{A}\,\mathcal{N}\gamma^{\mathcal{N}}\,\overline{D} \,\big( \gamma^{2\mathcal{N}}{\Psi_{-}^{\top}}+ \overline{C}\,\gamma^{-2\mathcal{N}}{\Psi_{+}} \big)
+\overline{B}\gamma^{2\mathcal{N}},\\[1mm]
{\mathbb{H}}^{(3)}&=\mu\left(
 A + \overline{A}\,\mathcal{N}\gamma^{\mathcal{N}}\,\overline{D} \,\big( \gamma^{2\mathcal{N}}{\Psi_{0}} +\overline{C}\,\gamma^{-2\mathcal{N}}{\Psi_{-}}\big)I_0
- \overline{B}\,\overline{C}\gamma^{-2\mathcal{N}}
\right),\\[1mm]
{\mathbb{H}}^{(4)}&=\mu\left(
AC + \overline{A}\,\mathcal{N}\gamma^{\mathcal{N}}\,\overline{D}\,\big( \gamma^{2\mathcal{N}}{\Psi_{-}^{\top}}+ \overline{C}\,\gamma^{-2\mathcal{N}}{\Psi_{+}} \big)I_0
- \overline{B}\gamma^{2\mathcal{N}}\right).
\end{aligned}
\eeq
\end{theorem}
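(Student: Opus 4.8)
The plan is to use linearity. Since $H=\sum_{m\geq1}H_m$ by \eqref{H:sum} and $\mathcal I^e$ is linear, I will expand a single summand $H_m(z)=\kappa A_mF_m(z)-z\overline{A_mF_m'(z)}+\overline{B_mF_m(z)}$ on the circle $|w|=\gamma$ in powers of $w^{\pm k}$ and read off its contribution as the $m$-th row of each matrix $\mathbb H^{(j)}$, so that $[\bm h^{(j)}]_k=\sum_{m}[\mathbb H^{(j)}]_{mk}$. Throughout I set $z=\Psi(w)$ and exploit two facts on $|w|=\gamma$: the Faber--Grunsky expansion $F_m(\Psi(w))=w^m+\sum_{k\geq1}c_{mk}w^{-k}$ from \eqref{eqn:Faberdefinition}, and the boundary identity $\overline w=\gamma^2 w^{-1}$, which holds because $w\overline w=\gamma^2$ there. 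Analyticity of $\partial\Omega$ guarantees convergence of all these series in a neighborhood of $|w|=\gamma$, so term-by-term rearrangement is legitimate.

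The two outer pieces are immediate. For $\kappa A_mF_m$, \eqref{eqn:Faberdefinition} splits directly into the positive part $\kappa A_mw^m$ and the negative part $\kappa A_m\sum_kc_{mk}w^{-k}$, giving the diagonal block $\kappa A$ in $\mathbb H^{(1)}$ and $\kappa AC$ in $\mathbb H^{(2)}$. For $\overline{B_mF_m}$, conjugating \eqref{eqn:Faberdefinition} and substituting $\overline w=\gamma^2w^{-1}$ sends $\overline{w}^{m}\mapsto\gamma^{2m}w^{-m}$ and $\overline{w}^{-k}\mapsto\gamma^{-2k}w^{k}$, producing $\overline{B_m}\gamma^{2m}w^{-m}+\overline{B_m}\sum_k\overline{c_{mk}}\gamma^{-2k}w^{k}$; these yield $\overline B\,\overline C\gamma^{-2\mathcal N}$ in $\mathbb H^{(1)}$ and $\overline B\gamma^{2\mathcal N}$ in $\mathbb H^{(2)}$.

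The crux is the mixed term $-z\overline{A_mF_m'}$. First I replace $F_m'$ by its Faber expansion \eqref{Faber:deriv:coeffi}, conjugate, and apply the boundary substitution to each $\overline{F_k(\Psi(w))}$, obtaining $\overline{F_m'(\Psi(w))}=\sum_k\overline{\widetilde d_{mk}}\big(\gamma^{2k}w^{-k}+\sum_l\overline{c_{kl}}\gamma^{-2l}w^{l}\big)$; the coefficient factor $\overline A\,\overline{\widetilde D}$ is then rewritten as $\overline A\,\mathcal N\gamma^{\mathcal N}\overline D$ using $\widetilde d_{mk}=m\gamma^m d_{mk}$ from \eqref{def:mat:D}. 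The delicate step is multiplying the resulting Laurent series by $z=\Psi(w)=w+a_0+\sum_{n\geq1}a_nw^{-n}$ from \eqref{eqn:extmapping} and separating the product into strictly positive powers (for $\bm h^{(1)}$) and non-positive powers (for $\bm h^{(2)}$). This is a convolution against the Laurent coefficients of $\Psi$, which I will show is encoded exactly by the blocks of \eqref{def:Psi:mat}: the $w^{-k}$ part routes through $\gamma^{2\mathcal N}\Psi_0$ on the positive side and $\gamma^{2\mathcal N}\Psi_-^{\top}$ on the non-positive side, while the $w^l$ part routes through $\overline C\gamma^{-2\mathcal N}\Psi_-$ on the positive side and $\overline C\gamma^{-2\mathcal N}\Psi_+$ on the non-positive side, the identities $[\Psi_-]_{lq}=a_{l-q}$, $[\Psi_+]_{q'l}=a_{q'+l}$ and the symmetry of $\Psi_+$ being what close the index contractions. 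The one genuinely subtle point is the $w^0$ term, which straddles the two halves: I will check that the right factor $I_0=\mathrm{diag}(0,1,1,\dots)$ deletes the spurious $w^0$ column from the positive blocks (so that it is counted only once, inside $\bm h^{(2)}$ through $\Psi_+$), and that $\gamma^{2\mathcal N}\Psi_0I_0$ collapses to the single surviving entry carrying the $w^1$ coefficient $-\overline{A_m\widetilde d_{m0}}$. Summing the three pieces produces $\mathbb H^{(1)}$ and $\mathbb H^{(2)}$.

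Finally, for $\mathcal I^e[H]$ I use \eqref{def:I} together with the identification $f_e=A_mF_m$ and $g_e=-B_mF_m$ read off from \eqref{complex:ue}, so that $\mathcal I^e[H_m]=\mu\big(A_mF_m+z\overline{A_mF_m'}-\overline{B_mF_m}\big)$. This is the same computation with the signs of the mixed and anti-holomorphic pieces reversed and an overall factor $\mu$, reproducing $\mathbb H^{(3)}$ and $\mathbb H^{(4)}$ up to one point: since \eqref{def:I} fixes $\mathcal I^e$ only up to an additive constant, the $w^0$ (constant) term is meaningless and is dropped, which is precisely why $\mathbb H^{(4)}$ carries the extra right factor $I_0$ on its mixed block that is absent from $\mathbb H^{(2)}$ (the other two blocks already vanish in the $w^0$ column because $c_{m0}=0$ and $\overline{B_0}=0$). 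I expect the main obstacle to be exactly this convolution bookkeeping of the mixed term --- establishing that multiplication by $\Psi$ decomposes into the stated $\Psi_0,\Psi_-,\Psi_-^{\top},\Psi_+$ blocks with the correct placement of $\gamma^{\pm2\mathcal N}$ and the $I_0$ truncation at the $w^0/w^1$ interface --- rather than the two outer pieces, which are routine.
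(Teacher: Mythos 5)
Your proposal is correct and follows essentially the same route as the paper's proof: decompose $H_m$ into the three pieces $A_mF_m$, $\overline{B_mF_m}$, and $z\overline{A_mF_m'}$, expand the first via \eqref{eqn:Faberdefinition}, the second via conjugation and $\overline w=\gamma^2w^{-1}$, and the third via \eqref{Faber:deriv:coeffi} followed by the convolution with the Laurent coefficients of $\Psi$ encoded in $\Psi_0,\Psi_-,\Psi_-^{\top},\Psi_+$, then recombine with weights $(\kappa,1,-1)$ and $\mu(1,-1,1)$ for $H$ and $\mathcal I^e[H]$ respectively. Your reading of the $I_0$ truncations and the $w^0/w^1$ bookkeeping also matches how the paper's index ranges ($n\geq1$ for positive powers, $n\geq0$ for non-positive) realize those factors.
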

\begin{proof}
Putting
\beq\label{def:I_123}
I_1(z)=A_m F_m(z),\quad I_2(z)=\overline{B_m F_m(z)}, \quad I_3(z)=z\overline{A_m\, F_m'(z)},
\eeq
it holds that, on $\p\Om$, 
\beq
\label{H_m:decomp}
\begin{cases}
H_m(z)=\kappa I_1(z) + I_2(z)-I_3(z),\\[1mm]
 \mathcal{I}^e [H_m](z)=\mu\left(I_1(z)-I_2(z)+I_3(z)\right).
\end{cases}
\eeq

Using \eqref{H_m:decomp} and the relation
\beq\label{boundary:w}
\overline{w}=\gamma^2w^{-1}\quad\mbox{for }|w|=\gamma,
\eeq
we obtain
\begin{align*}
 I_1&=  A_m \Big(w^{m}+\sum_{n=1}^{\infty}c_{mn}w^{-n}\Big)= \sum_{n=1}^\infty \left[{A}\right]_{mn}\, w^n +\sum_{n=0}^\infty \left[{A}{C}\right]_{mn}\, w^{-n} 
 \end{align*}
 and
 \begin{align*}
I_2
 &=\, \overline{B_m}\, \Big(\gamma^{2m}{w^{-m}}+\sum_{l=1}^{\infty}\overline{c_{ml}}\gamma^{-2l}w^{l}\Big)
  = \sum_{n=1}^\infty\left[\,\overline{{B}}\,\overline{{C}}\gamma^{-2\mathcal{N}} \right]_{mn}w^n+\sum_{n=0}^\infty \left[\, \overline{{B}}\,\gamma^{2\mathcal{N}} \right]_{mn}w^{-n}.
 \end{align*}
Similarly, by \eqref{Faber:deriv:coeffi}, 
 \begin{align*}
I_3&=z\overline{A_m}\sum_{j=0}^{m-1} \overline{\widetilde{d}_{m j}}\,\overline{F_j(z)}\\
&=\overline{A_m}\Psi(w)\sum_{j=0}^{m-1}\overline{\widetilde{d}_{mj}}\,  \Big(\overline{w^{j}}+\sum_{l=1}^{\infty}\overline{c_{jl}w^{-l}}\Big)\\
&=
\overline{A_m}\Big(\sum_{k=-1}^\infty a_{k}w^{-k}\Big)
 \sum_{j=0}^{m-1}\,\overline{\widetilde{d}_{mj}} \, \Big(\gamma^{2j}{w^{-j}}
+\sum_{l=1}^{\infty}\,\overline{c_{jl}}\,\gamma^{-2l}w^l\Big).
\end{align*}
By setting $a_{-k}=0$ for all $k\geq 2$, we can write
\begin{align*}
I_3
=&\sum_{n=1}^{\infty}\bigg(\overline{A_m}\,\sum_{j=0}^{\infty}\,\overline{\widetilde{d}_{mj}}\,\gamma^{2j}a_{-n-j}
+\overline{A_m}\,\sum_{j,l=0}^{\infty} \overline{ \widetilde{d}_{mj}}\,\overline{ c_{jl}}\, \gamma^{-2l}a_{l-n}
\bigg)w^{n} \\
+&\sum_{n=0}^{\infty}\bigg( \overline{A_m}\,\sum_{j=0}^{\infty}\,\overline{\widetilde{d}_{mj}}\,\gamma^{2j}a_{n-j}
+ \overline{A_m}\,\sum_{j,l=0}^{\infty}\overline{\widetilde{d}_{mj}}\, \overline{c_{jl}}\, \gamma^{-2l}a_{l+n}\bigg)w^{-n}.
\end{align*}
Using the matrices introduced in Section~\ref{subsec:matrices}, specifically \eqref{def:Psi:mat} and \eqnref{def:mat:D}, we express $I_3$ as
\begin{align*}
I_3
=\ & \sum_{n=1}^{\infty}\left[ \overline{{A}}\mathcal{N}\gamma^{\mathcal{N}}\overline{{D}}\, \Big(\gamma^{2\mathcal{N}}\Psi_0 + \overline{C}\gamma^{-2\mathcal{N}}{\Psi_-}\Big)\, \right]_{mn}w^n\\
+&\sum_{n=0}^\infty \left[\overline{{A}}\mathcal{N}\gamma^{\mathcal{N}}\overline{{D}}\, \Big( \gamma^{2\mathcal{N}}{\Psi_{-}^{\top}}+ \overline{{C}}\gamma^{-2\mathcal{N}}{\Psi_{+}}\Big)  \right]_{mn}w^{-n}.
\end{align*}
Combining this with \eqref{H_m:decomp} and \eqref{def:I_123}, the proof is complete. 
\end{proof}

\smallskip

For later use, we introduce the following notation.
\begin{notation}\label{notation:x:h}
Let ${\bm h}^{(j)}$ be the row vectors defined in Theorem~\ref{Thm:Mat:H}. 
We define the block matrix $\bm h$ of size $(1\times 8)$ as 
\begin{align}\label{def:block:H}
\bm{h} &:= \begin{bmatrix} 
\bm{h}^{(1)} &\overline{\bm{h}^{(1)} } & \bm{h}^{(2)} &\overline{\bm{h}^{(2)}} & \bm{h}^{(3)} &\overline{\bm{h}^{(3)} } & \bm{h}^{(4)} & \overline{\bm{h}^{(4)}}\,
\end{bmatrix}.
\end{align}
\end{notation}

\section{Geometric series expansion for the single-layer potential}\label{subsec:operators}

In this section, for the density functions  $\psi$ and $\phi$ in \eqref{complex:sol:u}, we derive the series expansions of the interior and exterior single-layer potentials in powers of $w^{\pm n}$. 
We can expand $\psi$ and $\phi$ into the basis $\{\varphi_{\pm n}\}_{n\in\NN\cup\{0\}}$ (see \eqref{exterior_basis}) as
\begin{align}\label{densities:general}
\begin{aligned}
\psi &= \sum_{n=1}^\infty \left( x_{n}^{{e}}\,\varphi_n  + x_{-n}^{{e}}\,\varphi_{-n}\right),\\
\phi &= \sum_{n=1}^{\infty}\left( x_{n}^{{i}} \,\varphi_n + x_{-n}^{{i}}\,\varphi_{-n}\right) + x_{0}^{{i}}\,\varphi_0
\end{aligned}
\end{align}
with complex coefficients $x_{\pm n}^e$ and $x_{\pm n}^i$ by using the fact that $\bpsi\in L^2_{\mathcal{R}}(\partial \Omega)$.

For later use, we introduce the following notation.
\begin{notation}\label{notation:x}
For the coefficients in \eqref{densities:general}, we define the row vectors
\begin{align}\label{def:vec:b}
{\bm x}^{e}_\pm&=
\begin{bmatrix}
0& x^e_{\pm 1} & x^e_{\pm 2} & x^e_{\pm 3} &\cdots
\end{bmatrix},\\
{\bm x}^{i}_+&=
\begin{bmatrix}
0 & x^i_{1} & x^i_{2} & x^i_{3}& \cdots
\end{bmatrix},\\
{\bm x}^{i}_-&=
\begin{bmatrix}
x^i_{0} & x^i_{- 1} & x^i_{- 2} & x^i_{-3}& \cdots
\end{bmatrix}.
\end{align}
We define the block row matrix $\bm x$ of size $(1\times 8)$ as 
\begin{align}\label{def:block:x}
\bm{x} := \begin{bmatrix} 
{\bm x}^e_+ &\  \overline{{\bm x}^e_+} &\  {\bm x}^e_- &\  \overline{{\bm x}^e_-} & 
\ {\bm x}^i_+ & \ \overline{{\bm x}^i_+} &\  {\bm x}^i_- & \overline{{\bm x}^i_-} \,
\end{bmatrix}.
\end{align}
\end{notation}

\subsection{Integral operators for geometric density basis}\label{subsec:integral:density}

Our analysis begins with estimating the boundary integral operators introduced in Subsection~\ref{subsec:CIE} when applied to geometric density basis $\varphi_{\pm n}$.

For $n\geq 1$, it follows from \eqref{Faber:log:expan} (see \cite{Cho:2024:ASR,Mattei:2021:EAS}) that
\begin{align}\label{Lphi:1}
\mathcal{L}[\varphi_n](z)
&=\begin{cases}
\ds -\frac{1}{n}\gamma^{-n}F_n(z)\ \quad\qquad\qquad\quad&\mbox{in }\Om,\\[3mm]
\ds -\frac{1}{n}\gamma^{-n}\left(F_n(z)-w^n\right)\ \quad\qquad\qquad\quad&\mbox{in }\CC\setminus\overline{\Om},
\end{cases}\\ \label{Lphi:2}
\mathcal{L}[\varphi_{-n}](z)
&=\begin{cases}
\ds 0\quad\qquad\qquad\qquad\qquad\qquad&\mbox{in }\Om,\\
\ds -\frac{1}{n}\gamma^n w^{-n}\quad\qquad\qquad\qquad\qquad\qquad&\mbox{in }\CC\setminus\overline{\Om}.
\end{cases}
\end{align}

Differentiating \eqref{Lphi:1} and \eqref{Lphi:2} with respect to $z$, we obtain, for $n\geq 1$,
\begin{align}\label{Cphi:1}
\mathcal{C}[\varphi_n](z)
&=\begin{cases}
\ds -\frac{1}{n}\gamma^{-n}F_n'(z)\qquad&\mbox{in }\Om,\\[3mm]
\ds -\frac{1}{n} \gamma^{-n}F_n'(z)+ \gamma^{-n}w^{n-1}\frac{1}{\Psi'(w)}\qquad&\mbox{in }\CC\setminus\overline{\Om},
\end{cases}\\ \label{Cphi:2}
\mathcal{C}[\varphi_{-n}](z)
&=\begin{cases}
\ds 0\quad\qquad\qquad\qquad\qquad&\mbox{in }\Om,\\
\ds\gamma^n w^{-n-1} \frac{1}{\Psi'(w)}\quad\qquad\qquad\qquad\qquad&\mbox{in }\CC\setminus\overline{\Om}.
\end{cases}
\end{align}
We also have
\beq\label{C:vphi0}
\mathcal{C}[\varphi_0](z)=0\quad\mbox{for }z\in\Om.
\eeq
Indeed, by \eqref{dsigma}, it holds that
\begin{align*}
\mathcal{C}[\varphi_0](z)
&=
\frac{1}{2\pi}\int_{0}^{2\pi}\frac{1}{z-\Psi(\gamma e^{\rmi\theta})}\,d\theta\quad\mbox{for }z\in\Om\\
&=\frac{1}{2\pi}\int_{0}^{2\pi}\frac{1}{z-\Psi(r e^{\rmi\theta})}\,d\theta\quad\mbox{for any }r>\gamma\\
&=\frac{1}{2\pi}\lim_{r\to\infty}\int_{0}^{2\pi}\frac{1}{z-\Psi(r e^{\rmi\theta})}\,d\theta=0
\end{align*}
Since
$$\zeta=\Psi(\gamma e^{\rmi\theta})=\sum_{k=-1}^\infty a_{k}\gamma^{-k}e^{-\rmi k \theta}\quad\mbox{for }\zeta\in\p\Om,$$
we obtain the following relation (see the proof of Lemma 3.1 in \cite{Mattei:2021:EAS} for the convergence of the right-hand side):
\beq\label{Cphi:3}
\mathcal{C}[\overline{\zeta}\varphi_{l}](z)=\sum_{k=-1}^\infty \overline{ a_{k}}\,\gamma^{-k}\,\mathcal{C}[\varphi_{k+l}](z)\quad \mbox{for all } l\in \ZZ.
\eeq

\paragraph{Decomposition of $\mathcal{C}$. } 
We can decompose $\mathcal{C}[\varphi_k]$ into two components: one that does not explicitly involve $1/\Psi'(w)$, and one that does. Specifically, for $n\in\NN$, we have
 \beq\label{C_decomp}
\mathcal{C}[\varphi_{\pm n}](z)={\mathcal{C}}_1[\varphi_{\pm n}](z)+{\mathcal{C}}_2[\varphi_{\pm n}](z),\quad z\in \CC\setminus\p\Om,
\eeq
where 
\beq\label{def:C1}
\begin{aligned}
&{\mathcal{C}}_1[\varphi_{-n}](z):=0,\\
&{\mathcal{C}}_1[\varphi_n](z):= -\frac{1}{n}\gamma^{-n}F_n'(z),\\
\end{aligned}
\eeq
and for $k=\pm n$,
\beq\label{def:C2}
\begin{aligned}
{\mathcal{C}}_2[\varphi_k](z):=
\begin{cases}
\ds 0,&z\in\Om,\\
\ds \gamma^{-k}w^{k-1}\frac{1}{\Psi'(w)},&z\in\CC\setminus\overline{\Om}.
\end{cases}
\end{aligned}
\eeq
It is important to observe that the decomposition of $\mathcal{C}$ in \eqref{C_decomp} leads to the cancellation of terms involving ${1}/{\Psi'(w)}$ in the series expansions of the single-layer potential on $\p\Om$. This cancellation plays a crucial role in deriving the matrix formulation of the elastostatic inclusion problem. Furthermore, $\mathcal{C}_1[\varphi_{\pm n}](z)$ are continuous across $\p\Om$. These observations are formalized in the following lemma:
\begin{lemma}\label{prop:C2:main}
For each $k\in\ZZ\setminus\{0\}$, the following holds. 
\begin{itemize}
\item[\rm (a)]
Continuity of $\mathcal{C}_1[\varphi_k]$:
$$\mathcal{C}_1[\varphi_k]\Big|_{\p\Om}^+=\mathcal{C}_1[\varphi_k]\Big|_{\p\Om}^-.$$
\item[\rm (b)]
Cancellation of terms involving $\mathcal{C}_2[\varphi_k]$:
$$
 -\Psi(w)\overline{\mathcal{C}_2[\varphi_k](z)} + \overline{\mathcal{C}_2[\overline{\zeta}\varphi_k](z)}\to 0\quad\mbox{as }z\to\p\Om,$$
where the limit is taken from both the exterior and interior of $\Om$.
\item[\rm(c)] Reduction of the full combination to the \(\mathcal{C}_1\) part:
$$
\left(-\Psi(w)\,\overline{\mathcal{C}[\varphi_k](z)} + \overline{\mathcal{C}[\overline{\zeta}\varphi_k](z)}\,\right)\Big|^\pm_{\p\Om}=\left(-\Psi(w)\,\overline{\mathcal{C}_1[\varphi_k](z)} + \overline{\mathcal{C}_1[\overline{\zeta}\varphi_k](z)}\,\right)\Big|_{\p\Om}.
$$
\end{itemize}
 \end{lemma}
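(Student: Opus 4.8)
The plan is to prove the three parts in the order (a), (b), (c), since (c) will follow by combining (a) and (b) with the definitions in \eqref{C_decomp} and \eqref{Cphi:3}.

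For part (a), I would argue directly from the definition \eqref{def:C1}. When $k=-n<0$, $\mathcal{C}_1[\varphi_{-n}]\equiv 0$ identically, so the continuity is trivial. When $k=n>0$, we have $\mathcal{C}_1[\varphi_n](z)=-\tfrac{1}{n}\gamma^{-n}F_n'(z)$, which is a polynomial in $z$ (since $F_n$ is a polynomial), hence entire. An entire function is in particular continuous across $\p\Om$, so the one-sided limits from inside and outside $\Om$ coincide. This is the easy part.

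For part (b), which I expect to be the main obstacle, the goal is to show the combination $-\Psi(w)\,\overline{\mathcal{C}_2[\varphi_k](z)}+\overline{\mathcal{C}_2[\overline{\zeta}\varphi_k](z)}$ vanishes in the boundary limit. I would first expand $\mathcal{C}_2[\overline{\zeta}\varphi_k]$ using the analogue of relation \eqref{Cphi:3}: since $\overline{\zeta}\varphi_k$ decomposes via $\overline{\zeta}=\sum_{j\geq -1}\overline{a_j}\gamma^{-j}e^{-\rmi j\theta}$ on $\p\Om$, we get $\mathcal{C}_2[\overline{\zeta}\varphi_k](z)=\sum_{j=-1}^\infty \overline{a_j}\gamma^{-j}\,\mathcal{C}_2[\varphi_{k+j}](z)$ for $z\in\CC\setminus\overline{\Om}$. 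Inserting the explicit exterior formula \eqref{def:C2}, namely $\mathcal{C}_2[\varphi_{k+j}](z)=\gamma^{-(k+j)}w^{k+j-1}/\Psi'(w)$, and taking the complex conjugate, the sum telescopes into a factor involving $\overline{\sum_j a_j w^{-(j+1)}}=\overline{\Psi'(w)}$ multiplied by $\gamma^{-k}\overline{w^{k-1}}/\overline{\Psi'(w)}$. The key cancellation is that $\overline{\mathcal{C}_2[\overline{\zeta}\varphi_k]}$ produces exactly the term $\Psi(w)\,\overline{\mathcal{C}_2[\varphi_k]}$ in the boundary limit $|w|\to\gamma$, using $\overline{w}=\gamma^2 w^{-1}$ from \eqref{boundary:w} to match the factor $\Psi(w)$ against the conjugated conformal-mapping series. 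The subtle point is justifying the exchange of summation and conjugation and confirming convergence near $\p\Om$; here I would invoke the analyticity of $\p\Om$ (so $\Psi$ extends to $|w|>\gamma-\delta$, and $1/\Psi'(w)$ is bounded and holomorphic in a neighborhood of $|w|=\gamma$) together with the Grunsky bound \eqref{c:bound}, mirroring the convergence argument referenced in the proof of Lemma~3.1 in \cite{Mattei:2021:EAS}.

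For part (c), I would combine the previous two parts. Writing $\mathcal{C}=\mathcal{C}_1+\mathcal{C}_2$ from \eqref{C_decomp}, linearity gives
\[
-\Psi(w)\,\overline{\mathcal{C}[\varphi_k]}+\overline{\mathcal{C}[\overline{\zeta}\varphi_k]}=\Big(-\Psi(w)\,\overline{\mathcal{C}_1[\varphi_k]}+\overline{\mathcal{C}_1[\overline{\zeta}\varphi_k]}\Big)+\Big(-\Psi(w)\,\overline{\mathcal{C}_2[\varphi_k]}+\overline{\mathcal{C}_2[\overline{\zeta}\varphi_k]}\Big).
\]
By part (b) the second bracket tends to $0$ in the boundary limit from either side, leaving only the $\mathcal{C}_1$ combination. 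Finally, part (a) ensures the $\mathcal{C}_1$ combination has a common boundary value from both sides (the interior and exterior limits agree), so the $\pm$ superscript can be dropped and the claimed identity holds. The only care needed is to confirm that $\mathcal{C}_1[\overline{\zeta}\varphi_k]$ inherits continuity across $\p\Om$ as well; this follows from part (a) applied termwise to the expansion $\mathcal{C}_1[\overline{\zeta}\varphi_k]=\sum_j\overline{a_j}\gamma^{-j}\mathcal{C}_1[\varphi_{k+j}]$ together with uniform convergence guaranteed by the analyticity of the boundary.
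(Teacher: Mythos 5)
Your overall strategy coincides with the paper's: (a) is immediate because $\mathcal{C}_1[\varphi_{-n}]\equiv 0$ and $\mathcal{C}_1[\varphi_n]$ is a polynomial multiple of $F_n'$, hence entire; (b) is proved by expanding $\mathcal{C}_2[\overline{\zeta}\varphi_k]$ through the analogue of \eqref{Cphi:3}, inserting the explicit exterior formula \eqref{def:C2}, and using $\overline{w}=\gamma^2w^{-1}$ on $|w|=\gamma$; and (c) follows by linearity. One intermediate identity in your part (b) is wrong, though: $\sum_{j\ge -1} a_j w^{-(j+1)}$ equals $\Psi(w)/w$, not $\Psi'(w)$ (indeed $\Psi'(w)=\sum_{j\ge-1}(-j)a_jw^{-j-1}$), and if that step were taken literally the $\overline{\Psi'(w)}$ in the denominator would cancel and the two terms would no longer match. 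The series that actually emerges after applying \eqref{boundary:w} is $\sum_{j\ge -1} a_j\gamma^{-2j}\overline{w}^{\,j}=\sum_{j\ge-1}a_jw^{-j}=\Psi(w)$, which is exactly what reproduces the factor $\Psi(w)\,\overline{\mathcal{C}_2[\varphi_k]}$ you assert in your concluding sentence; so the slip is cosmetic rather than structural, but it should be corrected. You should also add the one-line observation that the interior limit in (b) is trivial since $\mathcal{C}_2[\varphi_k]\equiv 0$ in $\Om$ by \eqref{def:C2}; with that, your argument matches the paper's proof.
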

\begin{proof}
Part (a) follows directly from the definition of  $\mathcal{C}_1[\varphi_k]$ in \eqref{def:C1}, which is continuous in $\CC$. 

To prove (b), we consider $z\in \CC\setminus\overline{\Om}$ and use the definition in \eqref{def:C2} and the relation \eqref{boundary:w}. We compute
\begin{align*}
 -\overline{\Psi(w)}\,{\mathcal{C}_2[\varphi_k](z)} + {\mathcal{C}_2[\overline{\zeta}\varphi_k](z)}
 &=-\bigg(\overline{\Psi(w)} \gamma^{-k}w^{k-1}+\sum_{j=-1}^\infty \overline{ a_{j}}\,\gamma^{-j}\,\gamma^{-j-k}w^{j+k-1}  \bigg)\frac{1}{\Psi'(w)}\\
 &=-\bigg(\overline{\Psi(w)}+\sum_{j=-1}^\infty \overline{ a_{j}}\,\gamma^{-2j}\,w^{j}\bigg)\gamma^{-k}w^{k-1}\frac{1}{\Psi'(w)}.
\end{align*}
By \eqnref{boundary:w}, we prove (b). 
Part (c) follows directly from (a) and (b). 
\end{proof}

\subsection{Exterior expansion of the single-layer potential}

For $\psi$ in \eqref{densities:general}, we represent the corresponding single-layer potential in the exterior of $\Om$ as a series of $w^{\pm n}$. 

\begin{lemma}\label{lemma:ext:L}
As $z$ tends to $\p\Om$ from the exterior of $\Om$, we have
\beq\label{ext:L:mat1}
\begin{aligned}
\ds \mathcal{L}[\psi] (z)&=-\sum_{k=0}^\infty\sum_{n=1}^\infty \Big( x^e_n \left[\mathcal{N}^{-1}_0\gamma^{-\mathcal{N}}C\right]_{nk}+x^e_{-n}\left[ \mathcal{N}^{-1}_0\gamma^{\mathcal{N}} \right]_{nk}\Big) w^{-k},\\
\ds \overline{\mathcal{L}[ \overline{\psi} ] (z)} &=-\sum_{k=1}^\infty \sum_{n=1}^\infty \Big(x^e_n \,\left[\mathcal{N}^{-1}_0\gamma^{-\mathcal{N}}\right]_{nk}
+x^e_{-n}\,[\mathcal{N}^{-1}_0\gamma^{\mathcal{N}}\overline{C}\gamma^{-2\mathcal{N}}]_{nk}\Big)w^{k}.
\end{aligned}
\eeq

\end{lemma}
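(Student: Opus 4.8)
The plan is to substitute the basis expansion of $\psi$ from \eqref{densities:general} into $\mathcal{L}[\psi]$ and $\overline{\mathcal{L}[\overline{\psi}]}$, apply the explicit exterior formulas \eqref{Lphi:1}--\eqref{Lphi:2} for $\mathcal{L}[\varphi_{\pm n}]$ term by term, and then organize the resulting double series into the stated matrix products. Since $\mathcal{L}$ is linear, $\mathcal{L}[\psi]=\sum_{n\geq 1}\big(x^e_n\,\mathcal{L}[\varphi_n]+x^e_{-n}\,\mathcal{L}[\varphi_{-n}]\big)$, and the two families \eqref{Lphi:1} and \eqref{Lphi:2} immediately separate the contributions of $x^e_n$ and $x^e_{-n}$.

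First I would treat $\mathcal{L}[\psi]$, the more direct of the two. On $\CC\setminus\overline{\Om}$ the Faber expansion \eqref{eqn:Faberdefinition} gives $F_n(\Psi(w))-w^n=\sum_{k\geq 1}c_{nk}w^{-k}$, so \eqref{Lphi:1} becomes $\mathcal{L}[\varphi_n](z)=-\tfrac{1}{n}\gamma^{-n}\sum_{k\geq 1}c_{nk}w^{-k}$, which introduces the Grunsky matrix through $\mathcal{N}^{-1}_0\gamma^{-\mathcal{N}}C$, while \eqref{Lphi:2} contributes the purely diagonal factor $\mathcal{N}^{-1}_0\gamma^{\mathcal{N}}$ from the $x^e_{-n}$ terms. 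Collecting the coefficient of each $w^{-k}$ and reading off the entries of the diagonal matrices in \eqref{Mat:D}, \eqref{Mat:D2} and of $C$ in \eqref{def:mat:C} yields the first identity; the $k=0$ term vanishes because $c_{n0}=0$ and because the diagonal block carries no constant term.

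The second identity is the substantive one. Here I would first use $\overline{\varphi_n}=\varphi_{-n}$ on $\p\Om$ (immediate from \eqref{exterior_basis}, since $h$ is real and positive) to write $\overline{\psi}=\sum_{n\geq 1}\big(\overline{x^e_n}\,\varphi_{-n}+\overline{x^e_{-n}}\,\varphi_n\big)$, then apply $\mathcal{L}$ via \eqref{Lphi:1}--\eqref{Lphi:2}, obtaining a series in $w^{-k}$. Taking the complex conjugate turns this into a series in $\overline{w}^{-k}$, and the decisive step is to invoke the boundary relation \eqref{boundary:w}, namely $\overline{w}=\gamma^2 w^{-1}$, which holds only on $|w|=\gamma$ and therefore forces the limit $z\to\p\Om$ from the exterior. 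This substitution sends $\overline{w^{-k}}$ to $\gamma^{-2k}w^{k}$, flipping negative powers into positive powers and manufacturing the factor $\gamma^{-2\mathcal{N}}$, while the conjugated Grunsky coefficients assemble into $\overline{C}$; combining these with the diagonal factors coming from $\mathcal{L}[\varphi_{\pm n}]$ produces the two matrix products displayed in the statement.

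The main obstacle is precisely this boundary passage: the formulas \eqref{Lphi:1}--\eqref{Lphi:2} are valid throughout the open exterior, but the reduction to a clean series in $w^{k}$ (rather than a mixed series in $w$ and $\overline{w}$) rests entirely on $\overline{w}=\gamma^2 w^{-1}$, which fails off the boundary; one must therefore justify interchanging the limit $z\to\p\Om$ with the infinite summation and with the term-by-term conjugation. The analyticity of $\p\Om$ (so that $\Psi$ extends to $|w|>\gamma-\delta$) together with the Grunsky bound \eqref{c:bound} yields uniform convergence of the relevant series in a neighborhood of $|w|=\gamma$, which legitimizes these operations. Once convergence is secured, the remaining work is the routine bookkeeping of the $\gamma$-powers and the index matching against \eqref{Mat:D}, \eqref{Mat:D2}, and \eqref{def:mat:C} to identify the matrix entries.
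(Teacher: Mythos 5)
Your proposal follows exactly the paper's own route: expand $\psi$ (and $\overline{\psi}$, via $\overline{\varphi_n}=\varphi_{-n}$) in the basis, apply the exterior formulas \eqref{Lphi:1}--\eqref{Lphi:2} termwise, convert $\overline{w^{-k}}$ to $\gamma^{-2k}w^{k}$ using \eqref{boundary:w}, and read off the matrix entries. The only difference is that you additionally discuss the justification for interchanging the boundary limit with the summation, which the paper's two-line proof leaves implicit; the substance of the argument is the same.
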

\begin{proof}
By \eqref{eqn:Faberdefinition}, \eqref{Lphi:1}--\eqref{Cphi:2} and \eqref{boundary:w},  as $z$ tends to $\p\Om$ from the exterior of $\Om$, 
\begin{align*}
\mathcal{L}[\psi](z) & = 
-\sum_{n=1}^{\infty} \left( x_n^e \frac{\gamma^{-n}}{n}\sum_{k=1}^{\infty}c_{nk}w^{-k}
 + x_{-n}^e \frac{\gamma^n}{n}w^{-n}\right),
\\
\overline{\mathcal{L}[ \overline{\psi} ](z)} & = 
-\sum_{n=1}^{\infty} \left( x_{n}^e \frac{\gamma^{-n}}{n}w^{n} 
+ x_{-n}^e \frac{r^n}{n} \sum_{k=1}^{\infty}\overline{c_{nk}}\,\gamma^{-2k}w^{k}\right),
\end{align*}
which imply the matrix expression \eqref{ext:L:mat1}.
\end{proof}

Unlike $\mathcal{L}[\psi]$, the functions $\Psi(w)\overline{\mathcal{C}[\varphi](z)}$ and $\overline{\mathcal{C}[\overline{\zeta}\varphi](z)}$ involve the term $\frac{1}{\overline{\Psi'(w)}}$, whose expansion in $w^{\pm n}$ is highly nontrivial. However, this term cancels out when we substract the two functions. This allows us to derive the following lemma with a proof in Section~\ref{subsec:Proof:Lemma}. 

\begin{lemma}\label{Lem:sing:mat:ex}
As $z$ approaches $\p\Om$ from the exterior of $\Om$,  we have the relation
 \begin{align*}
 -\Psi(w)\overline{\mathcal{C}[\psi](z)} + \overline{\mathcal{C}[\overline{\zeta}\psi](z)}
 =&\sum_{k=1}^\infty\sum_{n=1}^\infty \, \Big(\overline{x^e_n}\,[\mathbb{M}^{(2,1)}]_{nk}+\overline{x^e_{-n}}\,[\mathbb{M}^{(4,1)}]_{nk}\Big) \, w^k\\
 &+\sum_{k=0}^\infty\sum_{n=1}^\infty \,  \Big(\overline{x^e_n}\,[\mathbb{M}^{(2,2)}]_{nk}+\overline{x^e_{-n}}\,[\mathbb{M}^{(4,2)}]_{nk}\Big) \, w^{-k},
 \end{align*}
with matrices given by
\begin{align*}
\mathbb{M}^{(2,1)}&=
\overline{D}\gamma^{2\mathcal{N}}{\Psi_{0}} 
+ \overline{D}\,\overline{C} \gamma^{-2\mathcal{N}}{\Psi_{-}}
- \gamma^{\mathcal{N}} {\Psi_{-}^{\top}}\,\gamma^{-\mathcal{N}} \overline{D}\,\overline{C} \gamma^{-2\mathcal{N}},
&\mathbb{M}^{(4,1)}&=-\gamma^{-\mathcal{N}}{\Psi_+}\,\gamma^{-\mathcal{N}}\overline{D}\,\overline{C}\gamma^{-2\mathcal{N}},\\
\mathbb{M}^{(2,2)}&=
\overline{D} \gamma^{2\mathcal{N}} \Psi_{-}^{\top}
+\overline{D}\, \overline{C} \gamma^{-2\mathcal{N}}{\Psi_{+}}
-\gamma^{\mathcal{N}} \Psi_-^{\top}\, \gamma^{-\mathcal{N}} \overline{D} \gamma^{2\mathcal{N}},
&\mathbb{M}^{(4,2)}&=-\gamma^{-\mathcal{N}}{\Psi_+}\,\gamma^{-\mathcal{N}}\overline{D}\gamma^{2\mathcal{N}}.
\end{align*}
The superscripts $(i,j)$ in $ \mathbb{M}^{(i,j)}$ are same with those in Theorem~\ref{Thm:Mat:Se}. 
\end{lemma}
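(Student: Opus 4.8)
The plan is to eliminate the analytically intractable factor $1/\Psi'(w)$ at the outset by invoking the cancellation already established in Lemma~\ref{prop:C2:main}, and then to reduce everything to explicit Faber-polynomial expansions. First I would expand $\psi$ in the basis $\{\varphi_{\pm n}\}$ as in \eqref{densities:general} and apply part (c) of Lemma~\ref{prop:C2:main} term by term. Since $\overline{\mathcal{C}[\psi]}$ and $\overline{\mathcal{C}[\overline{\zeta}\psi]}$ enter only through the specific combination $-\Psi(w)\overline{\mathcal{C}[\,\cdot\,]}+\overline{\mathcal{C}[\overline{\zeta}\,\cdot\,]}$, Lemma~\ref{prop:C2:main}(c) lets me replace $\mathcal{C}$ by its $\mathcal{C}_1$-part on $\p\Om$, so that the boundary value becomes
\[
-\Psi(w)\,\overline{\mathcal{C}[\psi]}+\overline{\mathcal{C}[\overline{\zeta}\psi]}\,\Big|^\pm
=\sum_{n=1}^\infty \overline{x^e_n}\Big(-\Psi(w)\,\overline{\mathcal{C}_1[\varphi_n]}+\overline{\mathcal{C}_1[\overline{\zeta}\varphi_n]}\Big)
+\sum_{n=1}^\infty \overline{x^e_{-n}}\,\overline{\mathcal{C}_1[\overline{\zeta}\varphi_{-n}]},
\]
where the $\overline{x^e_{-n}}$-terms carry no $-\Psi(w)\overline{\mathcal{C}_1[\varphi_{-n}]}$ contribution because $\mathcal{C}_1[\varphi_{-n}]=0$ by \eqref{def:C1}. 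The analytic-boundary hypothesis and the sharp Grunsky bound \eqref{c:bound} justify interchanging these sums with the limit to $\p\Om$.

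Next I would compute each basis contribution explicitly. Using $\mathcal{C}_1[\varphi_n]=-\tfrac1n\gamma^{-n}F_n'$ from \eqref{def:C1}, the derivative expansion $F_n'=\sum_j\widetilde{d}_{nj}F_j$ from \eqref{Faber:deriv:coeffi}, and $F_j(\Psi(w))=w^j+\sum_l c_{jl}w^{-l}$ from \eqref{eqn:Faberdefinition}, I would rewrite $\mathcal{C}_1[\varphi_n](\Psi(w))=-\sum_j d_{nj}\big(w^j+\sum_l c_{jl}w^{-l}\big)$, recognizing the rescaled coefficients $d_{nj}=\widetilde{d}_{nj}/(n\gamma^n)$ of the matrix $D$ in \eqref{def:mat:D}. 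Conjugating and applying $\overline{w}=\gamma^2w^{-1}$ from \eqref{boundary:w} turns $\overline{w}^{\,j}$ and $\overline{w}^{-l}$ into $\gamma^{2j}w^{-j}$ and $\gamma^{-2l}w^{l}$, which after collecting produce the diagonal factors $\gamma^{2\mathcal{N}}$, $\gamma^{-2\mathcal{N}}$ together with $\overline{D}$ and $\overline{C}$. The terms carrying the multiplier $\overline{\zeta}$ are handled through the relation \eqref{Cphi:3} (in its $\mathcal{C}_1$ form), $\mathcal{C}_1[\overline{\zeta}\varphi_l]=\sum_{k\geq-1}\overline{a_k}\gamma^{-k}\mathcal{C}_1[\varphi_{k+l}]$; substituting the expansion just obtained and absorbing the weights $a_k\gamma^{-k}$ into the conformal-mapping matrices $\Psi_+,\Psi_-,\Psi_-^{\top},\Psi_0$ of \eqref{def:Psi:mat} yields the desired assembly. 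The remaining multiplication by $-\Psi(w)=-\sum_{k\geq-1}a_kw^{-k}$ produces the third summand $-\gamma^{\mathcal{N}}\Psi_-^{\top}\gamma^{-\mathcal{N}}(\cdots)$ appearing in $\mathbb{M}^{(2,1)}$ and $\mathbb{M}^{(2,2)}$.

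Finally I would sort the resulting double series by powers of $w$: the coefficients of $w^{k}$ with $k\geq1$ define $\mathbb{M}^{(2,1)}$ (from $\overline{x^e_n}$) and $\mathbb{M}^{(4,1)}$ (from $\overline{x^e_{-n}}$), while the coefficients of $w^{-k}$ with $k\geq0$ define $\mathbb{M}^{(2,2)}$ and $\mathbb{M}^{(4,2)}$. I expect the main obstacle to be organizational rather than conceptual: one must track carefully which index shift in the convolution against $a_k\gamma^{-k}$ produces $\Psi_+$ versus $\Psi_-$ versus its transpose $\Psi_-^{\top}$, and one must split each product cleanly into its nonnegative and negative $w$-powers so that the three-term structure of $\mathbb{M}^{(2,1)}$ and $\mathbb{M}^{(2,2)}$ emerges with the correct signs and $\gamma$-weights. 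The genuinely delicate analytic point, namely the vanishing of the $1/\Psi'(w)$ contributions, has already been dispatched by Lemma~\ref{prop:C2:main}, so what remains is the bookkeeping that the matrix notation of Section~\ref{subsec:matrices} is designed to streamline.
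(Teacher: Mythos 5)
Your proposal follows the paper's proof essentially verbatim: invoke Lemma~\ref{prop:C2:main}(c) to replace $\mathcal{C}$ by $\mathcal{C}_1$ (so that $1/\Psi'(w)$ never appears), expand via $\mathcal{C}_1[\varphi_n]=-\tfrac1n\gamma^{-n}F_n'$, $F_n'=\sum_j\widetilde{d}_{nj}F_j$, $F_j(\Psi(w))=w^j+\sum_l c_{jl}w^{-l}$ and $\overline{w}=\gamma^2w^{-1}$, and collect the three resulting pieces into the stated matrices. One slip in your bookkeeping narrative (which would not survive actually carrying out the computation): the multiplication by $-\Psi(w)$ is what produces the first two summands $\overline{D}\gamma^{2\mathcal{N}}\Psi_0+\overline{D}\,\overline{C}\gamma^{-2\mathcal{N}}\Psi_-$ of $\mathbb{M}^{(2,1)}$ (and $\overline{D}\gamma^{2\mathcal{N}}\Psi_-^{\top}+\overline{D}\,\overline{C}\gamma^{-2\mathcal{N}}\Psi_+$ of $\mathbb{M}^{(2,2)}$), while the $\overline{\zeta}$-convolution supplies the third summand $-\gamma^{\mathcal{N}}\Psi_-^{\top}\gamma^{-\mathcal{N}}\overline{D}\,(\cdots)$ together with all of $\mathbb{M}^{(4,1)}$ and $\mathbb{M}^{(4,2)}$ --- the reverse of the attribution you give.
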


\begin{theorem}[Exterior limit on $\p\Om$]\label{Thm:Mat:Se}
For $\psi$ given as in \eqref{densities:general}, as $z$ tends to $\p\Om$ from the exterior of $\Om$, we have
\begin{align*}
2S_{\partial \Omega}[\psi](z) =&\sum_{k=1}^\infty\sum_{n=1}^\infty \, \Big({x^e_n}\,[{\mathbb{S}}^{(1,1)}]_{nk}+\overline{x^e_n}\,[{\mathbb{S}}^{(2,1)}]_{nk}+{x^e_{-n}}\,[{\mathbb{S}}^{(3,1)}]_{nk}+\overline{x^e_{-n}}\,[{\mathbb{S}}^{(4,1)}]_{nk}\Big) \, w^k\\
 &+\sum_{k=0}^\infty\sum_{n=1}^\infty \,  \Big({x^e_n}\,[{\mathbb{S}}^{(1,2)}]_{nk}+\overline{x^e_n}\,[{\mathbb{S}}^{(2,2)}]_{nk}+{x^e_{-n}}\,[\mathbb{S}^{(3,2)}]_{nk}+\overline{x^e_{-n}}\,[\mathbb{S}^{(4,2)}]_{nk}\Big) \, w^{-k},
\end{align*}
and
\begin{align*}
\mathcal{I}^e\big[2S_{\partial \Omega}[\psi]\big](z)=&\sum_{k=1}^\infty\sum_{n=1}^\infty \, \Big({x^e_n}\,[{\mathbb{S}}^{(1,3)}]_{nk}+\overline{x^e_n}\,[{\mathbb{S}}^{(2,3)}]_{nk}+{x^e_{-n}}\,[\mathbb{S}^{(3,3)}]_{nk}+\overline{x^e_{-n}}\,[\mathbb{S}^{(4,3)}]_{nk}\Big) \, w^k\\
 &+\sum_{k=0}^\infty\sum_{n=1}^\infty \,  \Big({x^e_n}\,[{\mathbb{S}}^{(1,4)}]_{nk}+\overline{x^e_n}\,[{\mathbb{S}}^{(2,4)}]_{nk}+{x^e_{-n}}\,[\mathbb{S}^{(3,4)}]_{nk}+\overline{x^e_{-n}}\,[\mathbb{S}^{(4,4)}]_{nk}\Big) \, w^{-k}
\end{align*}
with semi-infinite matrices given by
\begin{align*}
{\mathbb{S}}^{(1,1)}&=-\alpha\mathcal{N}^{-1}_0 \gamma^{-\mathcal{N}}, &
{\mathbb{S}}^{(2,1)}&=\beta\, I_0\mathbb{M}^{(2,1)}I_0, &
{\mathbb{S}}^{(3,1)}&=-\alpha\mathcal{N}^{-1}_0\gamma^{-\mathcal{N}}\overline{C}\gamma^{-2\mathcal{N}}, &
{\mathbb{S}}^{(4,1)}&= \beta \, I_0\mathbb{M}^{(4,1)}I_0,
\\
{\mathbb{S}}^{(1,2)}&=-\alpha\mathcal{N}^{-1}_0\gamma^{-\mathcal{N}}C, &
{\mathbb{S}}^{(2,2)}&=\beta \, I_0\mathbb{M}^{(2,2)}, &
{\mathbb{S}}^{(3,2)}&=-\alpha\mathcal{N}^{-1}_0\gamma^{\mathcal{N}}, &
{\mathbb{S}}^{(4,2)}&= \beta \, I_0\mathbb{M}^{(4,2)},
\\
{\mathbb{S}}^{(1,3)}&=\mu\alpha\mathcal{N}^{-1}_0 \gamma^{-\mathcal{N}}, &
{\mathbb{S}}^{(2,3)}&=-\mu\beta \, I_0\mathbb{M}^{(2,1)}I_0, &
{\mathbb{S}}^{(3,3)}&=\mu\alpha\mathcal{N}^{-1}_0\gamma^{-\mathcal{N}}\overline{C}\gamma^{-2\mathcal{N}}, &
{\mathbb{S}}^{(4,3)}&= -\mu\beta \, I_0\mathbb{M}^{(4,1)}I_0,
\\
{\mathbb{S}}^{(1,4)}&=-\mu\beta\mathcal{N}^{-1}_0\gamma^{-\mathcal{N}}C, 
&{\mathbb{S}}^{(2,4)}&=-\mu\beta \, I_0\mathbb{M}^{(2,2)}I_0,
&{\mathbb{S}}^{(3,4)}&=-\mu\beta\mathcal{N}^{-1}_0\gamma^{\mathcal{N}},
&{\mathbb{S}}^{(4,4)}&=-\mu \beta \, I_0\mathbb{M}^{(4,2)}I_0.
\end{align*}

\end{theorem}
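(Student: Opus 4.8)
The plan is to read off the claimed series directly by substituting the three expansions already established into the representation of Lemma~\ref{lemma:Single:layer} and collecting the coefficients of each power $w^{\pm k}$. First I would record that, since $\bpsi\in L^2_{\mathcal R}(\p\Om)$ gives $\int_{\p\Om}\psi\,d\sigma=0$ (so $c_\psi=0$ and $\psi$ carries no zeroth mode), and since $z=\Psi(w)$ on $\p\Om$, Lemma~\ref{lemma:Single:layer} (via Lemma~\ref{lemma:S:complex:all} and $\kappa\beta=\alpha$) expresses the two target quantities as
\begin{align*}
2S_{\p\Om}[\psi]\big|^{+} &= \a\big(\mathcal{L}[\psi]+\overline{\mathcal{L}[\overline{\psi}]}\big)+\b\big(-\Psi(w)\overline{\C[\psi]}+\overline{\C[\overline{\zeta}\psi]}\big),\\
\mathcal{I}^e\big[2S_{\p\Om}[\psi]\big]\big|^{+} &= \mu\Big(\b\,\mathcal{L}[\psi]-\a\,\overline{\mathcal{L}[\overline{\psi}]}-\b\big(-\Psi(w)\overline{\C[\psi]}+\overline{\C[\overline{\zeta}\psi]}\big)\Big).
\end{align*}
Thus the whole statement reduces to inserting the expansions of the three building blocks $\mathcal{L}[\psi]$, $\overline{\mathcal{L}[\overline{\psi}]}$, and the difference $-\Psi(w)\overline{\C[\psi]}+\overline{\C[\overline{\zeta}\psi]}$ as $z\to\p\Om$ from outside.

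Next I would substitute. By Lemma~\ref{lemma:ext:L}, $\mathcal{L}[\psi]$ contributes only non-positive powers $w^{-k}$ and $\overline{\mathcal{L}[\overline{\psi}]}$ only positive powers $w^{k}$, in each case with explicit matrix coefficients paired with the unconjugated densities $x^e_n,x^e_{-n}$; after scaling by the prefactors in the two displayed identities these yield the blocks $\mathbb{S}^{(1,j)}$ and $\mathbb{S}^{(3,j)}$. By Lemma~\ref{Lem:sing:mat:ex}, the difference $-\Psi(w)\overline{\C[\psi]}+\overline{\C[\overline{\zeta}\psi]}$ contributes positive powers (matrices $\mathbb{M}^{(2,1)},\mathbb{M}^{(4,1)}$) and non-positive powers (matrices $\mathbb{M}^{(2,2)},\mathbb{M}^{(4,2)}$), now paired with the conjugated densities $\overline{x^e_n},\overline{x^e_{-n}}$; scaling by $\b$ (resp. $-\mu\b$) gives the blocks $\mathbb{S}^{(2,j)}$ and $\mathbb{S}^{(4,j)}$.

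Finally I would collect terms, separating the positive part $\sum_{k\ge1}(\cdots)w^{k}$ from the part $\sum_{k\ge0}(\cdots)w^{-k}$ and sorting by the four coefficient types $x^e_n,\overline{x^e_n},x^e_{-n},\overline{x^e_{-n}}$; comparison with the stated definitions of $\mathbb{S}^{(i,j)}$ then completes the identification. The only bookkeeping subtleties are the index truncations: the weights $\mathcal{N}^{-1}_0$ supply the $1/n$ factors while annihilating the excluded zeroth density mode, and the projection $I_0=\mathrm{diag}(0,1,1,\dots)$ discards the column $k=0$ wherever a positive-power sum must start at $k=1$ (and the spurious row $n=0$), which is exactly why the $\mathbb{M}$-blocks appear sandwiched by $I_0$ in $\mathbb{S}^{(2,\cdot)},\mathbb{S}^{(4,\cdot)}$.

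I expect no genuine obstacle here. The one delicate step---the cancellation of the intractable $1/\Psi'(w)$ terms upon forming the combination $-\Psi(w)\overline{\C[\psi]}+\overline{\C[\overline{\zeta}\psi]}$---has already been carried out in Lemma~\ref{prop:C2:main} and packaged as Lemma~\ref{Lem:sing:mat:ex}. Granting those, the present theorem is a purely linear assembly, and the care required is clerical: tracking complex conjugates, the scalar factors $\a,\b,\mu$, and the starting indices enforced by $\mathcal{N}^{-1}_0$ and $I_0$.
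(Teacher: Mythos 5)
Your proposal is correct and follows essentially the same route as the paper: the paper's proof likewise invokes Lemma~\ref{lemma:Single:layer} to reduce both quantities to the three building blocks $\mathcal{L}[\psi]$, $\overline{\mathcal{L}[\overline{\psi}]}$, and $-z\,\overline{\C[\psi]}+\overline{\C[\overline{\zeta}\psi]}$, then substitutes Lemmas~\ref{lemma:ext:L} and~\ref{Lem:sing:mat:ex} and collects coefficients of $w^{\pm k}$. Your remarks on $c_\psi=0$ and on the role of $\mathcal{N}^{-1}_0$ and $I_0$ in the index bookkeeping match the paper's (largely implicit) handling of those details.
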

\begin{proof}
By Lemma \ref{lemma:Single:layer}, we have
\begin{align}\notag
\begin{cases}
\ds
2 S_{\p \Om}[\psi](z)=
\a\,\mathcal{L}[\psi](z)+\a\,\overline{\mathcal{L}[\overline{\psi}]}+\b \left(-z\,\overline{\C[\psi](z)}+\,\overline{\C[\overline{\zeta}\psi](z)}\right),\\[2mm]
\ds \mathcal{I}^e\big[2S_{\partial \Omega}[\psi]\big](z)
= \mu\left(\b\,\mathcal{L}[\psi](z)-\a\,\overline{\mathcal{L}[\overline{\psi}]}-\b \left(-z\,\overline{\C[\psi](z)}+\,\overline{\C[\overline{\zeta}\psi](z)}\right)\right)\quad\mbox{for }z\in \p\Om^+.
\end{cases}
\end{align}
By Lemmas \ref{lemma:ext:L} and \ref{Lem:sing:mat:ex}, we obtain
\begin{align*}
2 S_{\p \Om}[\psi](z)
= &-\alpha\sum_{k=0}^\infty\sum_{n=1}^\infty \Big( x^e_n \left[\mathcal{N}^{-1}_0\gamma^{-\mathcal{N}}C\right]_{nk}+x^e_{-n}\left[ \mathcal{N}^{-1}_0\gamma^{\mathcal{N}} \right]_{nk}\Big) w^{-k}\\
&-\alpha\sum_{k=1}^\infty \sum_{n=1}^\infty \Big(x^e_n \,\left[\mathcal{N}^{-1}_0\gamma^{-\mathcal{N}}\right]_{nk}
+x^e_{-n}\,[\mathcal{N}^{-1}_0\gamma^{\mathcal{N}}\overline{C}\gamma^{-2\mathcal{N}}]_{nk}\Big)w^{k}\\
&+\beta \sum_{k=1}^\infty\sum_{n=1}^\infty \, \Big(\overline{x^e_n}\,[\mathbb{M}^{(1,2)}]_{nk}+\overline{x^e_{-n}}\,[\mathbb{M}^{(1,4)}]_{nk}\Big) \, w^k\\
 &+\beta\sum_{k=0}^\infty\sum_{n=1}^\infty \,  \Big(\overline{x^e_n}\,[\mathbb{M}^{(2,2)}]_{nk}+\overline{x^e_{-n}}\,[\mathbb{M}^{(2,4)}]_{nk}\Big) \, w^{-k}
 \end{align*}
 and
 \begin{align*}
 \mathcal{I}^e\big[2S_{\partial \Omega}[\psi]\big](z)
= &-\mu\beta\sum_{k=0}^\infty\sum_{n=1}^\infty \Big( x^e_n \left[\mathcal{N}^{-1}_0\gamma^{-\mathcal{N}}C\right]_{nk}+x^e_{-n}\left[ \mathcal{N}^{-1}_0\gamma^{\mathcal{N}} \right]_{nk}\Big) w^{-k}\\
&+\mu\alpha\sum_{k=1}^\infty \sum_{n=1}^\infty \Big(x^e_n \,\left[\mathcal{N}^{-1}_0\gamma^{-\mathcal{N}}\right]_{nk}
+x^e_{-n}\left[\mathcal{N}^{-1}_0\gamma^{\mathcal{N}}\overline{C}\gamma^{-2\mathcal{N}}\right]_{nk}\Big)w^{k}\\
&-\mu\beta \sum_{k=1}^\infty\sum_{n=1}^\infty \, \Big(\overline{x^e_n}\,[\mathbb{M}^{(1,2)}]_{nk}+\overline{x^e_{-n}}\,[\mathbb{M}^{(1,4)}]_{nk}\Big) \, w^k\\
 &-\mu\beta\sum_{k=0}^\infty\sum_{n=1}^\infty \,  \Big(\overline{x^e_n}\,[\mathbb{M}^{(2,2)}]_{nk}+\overline{x^e_{-n}}\,[\mathbb{M}^{(2,4)}]_{nk}\Big) \, w^{-k}.
 \end{align*}
 This proves the theorem. 
\end{proof}

\subsection{Interior expansion of the single-layer potential}

For $\phi$ in \eqref{densities:general}, we represent the corresponding single-layer potential in the interior of $\Om$, near the boundary $\p \Om$, as a series of $w^{\pm n}$. As before, we parametrize the point $z\in \p \Om$ as $z=\Psi(w)\in\p\Om$ for $|w|=\gamma$.

\begin{lemma}\label{lem:int:L}
As $z$ tends to $\p\Om$ from the interior of $\Om$, we have
\beq\label{int:L:mat1}
\begin{aligned}
\ds \mathcal{L}[\phi](z)&=-\sum_{k=1}^\infty\sum_{n=1}^\infty x^i_n \left[\mathcal{N}^{-1}_0\gamma^{-\mathcal{N}}C\right]_{nk}\, w^{-k} -\sum_{k=1}^\infty\sum_{n=1}^\infty x^i_{n}\left[ \mathcal{N}^{-1}_0\gamma^{-\mathcal{N}} \right]_{nk}\, w^{k} + x_0\ln \gamma,\\
\ds \overline{\mathcal{L}[ \overline{\phi} ](z)} &=-\sum_{k=1}^\infty \sum_{n=1}^\infty x^i_{-n} \,\left[\mathcal{N}^{-1}_0\gamma^{\mathcal{N}}\right]_{nk} w^{-k}
-\sum_{k=1}^\infty \sum_{n=1}^\infty x^i_{-n}\,\left [\mathcal{N}^{-1}_0\gamma^{-\mathcal{N}}\overline{C}\gamma^{-2\mathcal{N}}\right]_{nk}w^{k} + x_0\ln \gamma.
\end{aligned}
\eeq
\end{lemma}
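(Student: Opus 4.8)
The plan is to expand both $\phi$ and $\overline{\phi}$ termwise in the basis $\{\varphi_k\}$ of \eqref{exterior_basis} and apply the interior evaluation formulas for $\mathcal{L}[\varphi_{\pm n}]$, exactly as in the exterior computation of Lemma~\ref{lemma:ext:L}. The interior case is in fact simpler: the correction term $w^n$ appearing in \eqref{Lphi:1} is absent inside $\Om$, and $\mathcal{L}[\varphi_{-n}]$ vanishes identically in $\Om$ by \eqref{Lphi:2}, so no $1/\Psi'(w)$ factors ever arise.

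First I would use linearity to write $\mathcal{L}[\phi] = \sum_{n\ge1} x_n^i\,\mathcal{L}[\varphi_n] + \sum_{n\ge1} x_{-n}^i\,\mathcal{L}[\varphi_{-n}] + x_0^i\,\mathcal{L}[\varphi_0]$. By the interior branches of \eqref{Lphi:1} and \eqref{Lphi:2}, together with the convention $\mathcal{L}[\varphi_0]=\ln\gamma$ from Definition~\ref{def:SL:holomorphic}, the middle sum drops out and $\mathcal{L}[\varphi_n]=-\tfrac1n\gamma^{-n}F_n(z)$. Substituting the Faber expansion $F_n(\Psi(w))=w^n+\sum_{k\ge1}c_{nk}w^{-k}$ from \eqref{eqn:Faberdefinition} and collecting powers of $w$ yields the coefficient $-\tfrac1k\gamma^{-k}x_k^i$ for $w^k$ and $-\sum_n\tfrac1n\gamma^{-n}c_{nk}x_n^i$ for $w^{-k}$. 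These are precisely the entries of the diagonal matrix $\mathcal{N}^{-1}_0\gamma^{-\mathcal{N}}$ and of $\mathcal{N}^{-1}_0\gamma^{-\mathcal{N}}C$, giving the first line of \eqref{int:L:mat1}, with the constant $x_0^i\ln\gamma$ sitting outside the sums.

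For the conjugate series the key point is that $\overline{\varphi_k}=\varphi_{-k}$, since $\varphi_k=e^{\mathrm{i}k\theta}/h(\rho_0,\theta)$ with $h$ real. Conjugating \eqref{densities:general} therefore swaps the positive- and negative-index coefficients, so $\overline{\phi}=\sum_{n\ge1}\big(\overline{x_{-n}^i}\,\varphi_n+\overline{x_n^i}\,\varphi_{-n}\big)+\overline{x_0^i}\,\varphi_0$. Applying $\mathcal{L}$ in the interior again kills the $\varphi_{-n}$ terms, leaving $\mathcal{L}[\overline{\phi}]=-\sum_{n\ge1}\overline{x_{-n}^i}\,\tfrac1n\gamma^{-n}F_n(z)+\overline{x_0^i}\ln\gamma$. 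Taking the complex conjugate and invoking the boundary relation $\overline{w}=\gamma^2w^{-1}$ from \eqref{boundary:w} to rewrite $\overline{F_n(z)}=\gamma^{2n}w^{-n}+\sum_{k\ge1}\overline{c_{nk}}\,\gamma^{-2k}w^k$ converts the Faber data into powers of $w$. Matching coefficients identifies the $w^{-k}$ part with $\mathcal{N}^{-1}_0\gamma^{\mathcal{N}}$ and the $w^k$ part with $\mathcal{N}^{-1}_0\gamma^{-\mathcal{N}}\overline{C}\gamma^{-2\mathcal{N}}$, producing the second line of \eqref{int:L:mat1}.

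The argument is essentially bookkeeping, and the only two points needing care are the index reversal induced by conjugation and the consistent substitution $\overline{w}=\gamma^2w^{-1}$, which is valid only on $\p\Om$; I do not expect any genuine obstacle beyond keeping these conjugations straight. Convergence of the resulting series near $\p\Om$ follows from the analyticity assumption on $\Om$ and the Grunsky bound \eqref{c:bound}, exactly as in the exterior case.
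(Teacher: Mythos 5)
Your proposal is correct and follows essentially the same route as the paper: expand $\phi$ (and, via $\overline{\varphi_k}=\varphi_{-k}$, also $\overline{\phi}$) in the basis $\{\varphi_k\}$, use the interior formulas $\mathcal{L}[\varphi_{-n}]=0$, $\mathcal{L}[\varphi_n]=-\tfrac{1}{n}\gamma^{-n}F_n$, $\mathcal{L}[\varphi_0]=\ln\gamma$, substitute the Faber expansion, and convert conjugated powers with $\overline{w}=\gamma^2 w^{-1}$ on $\p\Om$. The bookkeeping and the resulting matrix identifications match the paper's proof exactly.
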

\begin{proof}
By \eqref{eqn:Faberdefinition}, \eqref{L:varphi0}, \eqref{Lphi:1}--\eqref{Cphi:2} and \eqref{boundary:w}, we obtain
\begin{align*}
\mathcal{L}[\phi](z) 
&=\sum_{n=1}^\infty x_{n}^i\Big(-\frac{\gamma^{-n}}{n}F_n(z) \Big)+ x_{0}^i\, \ln \gamma\\
&=-\sum_{n=1}^\infty x_{n}^i \frac{\gamma^{-n}}{n} w^n 
-\sum_{k=1}^\infty\sum_{n=1}^\infty x_{n}^i \frac{\gamma^{-n}}{n}c_{nk}w^{-k}+ x_{0}^i\, \ln \gamma
\end{align*}
and then
\begin{align*}
\mathcal{L}[\overline{\phi}](z) 
&=-\sum_{n=1}^\infty \overline{x_{-n}^i}\, \frac{\gamma^{-n}}{n} w^n 
-\sum_{k=1}^\infty\sum_{n=1}^\infty \overline{ x_{-n}^i}\, \frac{\gamma^{-n}}{n}c_{nk}w^{-k}+ \overline{x_{0}^i}\, \ln \gamma,\\
\overline{\mathcal{L}[ \overline{\phi} ](z)} & = 
-\sum_{n=1}^{\infty}x_{-n}^i \frac{\gamma^{n}}{n}w^{-n} 
-\sum_{k=1}^\infty\sum_{n=1}^\infty { x_{-n}^i}\, \frac{\gamma^{-n}}{n}\,\overline{c_{nk}}\,\gamma^{-2k}w^{k}+ {x_{0}^i}\, \ln \gamma,
\end{align*}
which lead the matrix expressions in \eqref{int:L:mat1}.
\end{proof}

\begin{lemma}\label{Lem:sing:mat:in}
As $z$ approaches $\p\Om$ from the interior of $\Om$,  we have
 \begin{align*}
 -\Psi(w)\overline{\mathcal{C}[\phi](z)} + \overline{\mathcal{C}[\overline{\zeta}\phi](z)}
 =&\sum_{k=1}^\infty\sum_{n=1}^\infty \, \Big(\overline{x^i_n}\,[\mathbb{M}^{(2,1)}]_{nk}+\overline{x^i_{-n}}\,[\mathbb{M}^{(4,1)}]_{nk}\Big) \, w^k\\
 &+\sum_{k=0}^\infty\sum_{n=1}^\infty \,  \Big(\overline{x^i_n}\,[\mathbb{M}^{(2,2)}]_{nk}+\overline{x^i_{-n}}\,[\mathbb{M}^{(4,2)}]_{nk}\Big) \, w^{-k}.
 \end{align*}
\end{lemma}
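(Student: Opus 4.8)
The plan is to exploit the fact that, in the interior, the combination $-\Psi(w)\overline{\mathcal{C}[\phi]}+\overline{\mathcal{C}[\overline{\zeta}\phi]}$ is governed entirely by the $\mathcal{C}_1$ part of $\mathcal{C}$, so that the computation reduces to the one already carried out for the exterior in Lemma~\ref{Lem:sing:mat:ex}. First I would expand $\phi$ in the basis $\{\varphi_{\pm n}\}_{n\ge 1}\cup\{\varphi_0\}$ as in \eqref{densities:general} and use linearity to reduce the claim to evaluating $-\Psi(w)\overline{\mathcal{C}[\varphi_k]}+\overline{\mathcal{C}[\overline{\zeta}\varphi_k]}$ separately for each index $k$. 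The decisive simplification, relative to the exterior case, is that $\mathcal{C}_2[\varphi_k]\equiv 0$ inside $\Om$ by \eqref{def:C2}; hence in the interior $\mathcal{C}[\varphi_k]=\mathcal{C}_1[\varphi_k]$, and no cancellation of $1/\Psi'(w)$ terms is required — the combination is already the $\mathcal{C}_1$-based expression. In this sense the interior case is genuinely easier than the exterior one.

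For the indices $k\in\ZZ\setminus\{0\}$, I would invoke part~(c) of Lemma~\ref{prop:C2:main}, which asserts that the interior and exterior boundary limits of $-\Psi(w)\overline{\mathcal{C}[\varphi_k]}+\overline{\mathcal{C}[\overline{\zeta}\varphi_k]}$ coincide, both equal to the $\mathcal{C}_1$-based expression. Consequently the $w^{\pm k}$-coefficients for $\varphi_{\pm n}$ are exactly those already extracted in the proof of Lemma~\ref{Lem:sing:mat:ex}: by linearity, $[\mathbb{M}^{(2,1)}]_{nk}$ (respectively $[\mathbb{M}^{(4,1)}]_{nk}$) is the $w^k$-coefficient attached to $\varphi_n$ (respectively $\varphi_{-n}$), and the same is true from the interior. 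Thus the very same matrices $\mathbb{M}^{(2,1)},\mathbb{M}^{(4,1)},\mathbb{M}^{(2,2)},\mathbb{M}^{(4,2)}$ reappear, now paired with the interior coefficients $\overline{x^i_{\pm n}}$ in place of $\overline{x^e_{\pm n}}$.

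It remains to account for the extra basis function $\varphi_0$, present in $\phi$ but absent in the exterior density $\psi$; since part~(c) excludes $k=0$, this needs a direct computation. In the interior $\mathcal{C}[\varphi_0]=0$ by \eqref{C:vphi0}, so only $\overline{\mathcal{C}[\overline{\zeta}\varphi_0]}$ survives, and \eqref{Cphi:3} gives $\mathcal{C}[\overline{\zeta}\varphi_0]=\sum_{j\ge 1}\overline{a_j}\,\gamma^{-j}\,\mathcal{C}[\varphi_j]$ there. Expanding $F_j'$ via \eqref{Faber:deriv:coeffi}, $F_i(\Psi(w))$ via \eqref{eqn:Faberdefinition}, and substituting $\overline{w}=\gamma^2 w^{-1}$ from \eqref{boundary:w}, I would verify that the resulting $w^{\pm k}$-coefficients are precisely the zeroth rows $[\mathbb{M}^{(4,1)}]_{0k}$ and $[\mathbb{M}^{(4,2)}]_{0k}$. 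This is exactly why the row vector $\bm{x}^i_-$ carries $x^i_0$ in its $0$-th slot, so that the $\varphi_0$ term is absorbed into the $\overline{x^i_{-n}}$ sum at $n=0$, completing the statement.

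The routine portion is the index bookkeeping — propagating conjugations and the $\overline{w}=\gamma^2 w^{-1}$ substitution through the double sums involving $\overline{D}$, $\overline{C}$, and the $\gamma^{\pm\mathcal{N}}$ factors. The only genuinely new point beyond Lemma~\ref{Lem:sing:mat:ex} is the $\varphi_0$ calculation and its identification with the zeroth rows of $\mathbb{M}^{(4,1)},\mathbb{M}^{(4,2)}$; I expect this to be the main (though modest) obstacle, everything else following from the interior identity $\mathcal{C}=\mathcal{C}_1$ and the coincidence of the two one-sided limits in Lemma~\ref{prop:C2:main}(c).
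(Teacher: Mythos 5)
Your proposal is correct and follows the same route as the paper: the paper's own proof is a one-line reduction via Lemma~\ref{prop:C2:main}(c) to the computation already done for the exterior trace in Lemma~\ref{Lem:sing:mat:ex}, which is exactly your main argument (with the additional, correct observation that the interior case is even simpler because $\mathcal{C}_2[\varphi_k]\equiv 0$ inside $\Om$, so $\mathcal{C}=\mathcal{C}_1$ there without any cancellation being needed).

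The one place where you go beyond the paper is the explicit treatment of $\varphi_0$, and this is a genuinely worthwhile addition rather than a detour. The paper's proof does not mention $\varphi_0$ at all, yet its contribution $\overline{\mathcal{C}[\overline{\zeta}\varphi_0]}=-\sum_{j\ge1}a_j\gamma^{-2j}j^{-1}\overline{F_j'(z)}$ (from \eqref{C:vphi0} and \eqref{Cphi:3}) is generically nonzero, and your identification of its $w^{\pm k}$-coefficients with the zeroth rows $[\mathbb{M}^{(4,1)}]_{0k}$, $[\mathbb{M}^{(4,2)}]_{0k}$ checks out: it is precisely the $n=0$ instance of the term $III$ in the proof of Lemma~\ref{Lem:sing:mat:ex}. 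Note, however, that the displayed formula in Lemma~\ref{Lem:sing:mat:in} has its inner sums starting at $n=1$, so as literally written it omits this $\overline{x^i_0}$ contribution; your reading (that it should be absorbed into the $n=0$ slot of $\overline{\bm{x}^i_-}$) is the one consistent with the downstream Theorem~\ref{Thm:Mat:Si}, where $\widetilde{\mathbb{S}}^{(4,1)}=\tilde\beta\,\mathbb{M}^{(4,1)}I_0$ deliberately lacks the left factor $I_0$ that appears in the exterior block ${\mathbb{S}}^{(4,1)}=\beta\,I_0\mathbb{M}^{(4,1)}I_0$. So your proof is complete where the paper's is terse, and it in fact flags a small indexing inconsistency in the lemma's statement.
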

\begin{proof}
The result immediately follows from Lemma~\ref{prop:C2:main}\,(c) and Lemma~\ref{Lem:sing:mat:ex} by applying the same argument to the interior trace.
\end{proof}

\begin{theorem}[Interior limit on $\p\Om$]\label{Thm:Mat:Si}
For $\phi$ given as in \eqref{densities:general} as $z$ tends to $\p\Om$ from the interior of $\Om$, we have
\begin{align*}
2{\widetilde{S}}_{\partial \Omega}[\phi](z) =&\sum_{k=1}^\infty\sum_{n=1}^\infty \, \Big({x^i_n}\,[{\mathbb{\widetilde S}}^{(1,1)}]_{nk}+\overline{x^i_n}\,[{\mathbb{\widetilde S}}^{(2,1)}]_{nk}+{x^i_{-n}}\,[{\mathbb{\widetilde S}}^{(3,1)}]_{nk}+\overline{x^i_{-n}}\,[{\mathbb{\widetilde S}}^{(4,1)}]_{nk}\Big) \, w^k\\
&+\sum_{k=0}^\infty\sum_{n=1}^\infty \,  \Big({x^i_n}\,[{\mathbb{\widetilde S}}^{(1,2)}]_{nk}+\overline{x^i_n}\,[{\mathbb{\widetilde S}}^{(2,2)}]_{nk}+{x^i_{-n}}\,[\mathbb{\widetilde S}^{(3,2)}]_{nk}+\overline{x^i_{-n}}\,[\mathbb{\widetilde S}^{(4,2)}]_{nk}\Big) \, w^{-k}
\end{align*}
and
\begin{align*}
\mathcal{I}^i\big[2{\widetilde{S}}_{\partial \Omega}[\phi]\big](z)=&\sum_{k=1}^\infty\sum_{n=1}^\infty \, \Big({x^i_n}\,[{\mathbb{\widetilde S}}^{(1,3)}]_{nk}+\overline{x^i_n}\,[{\mathbb{\widetilde S}}^{(2,3)}]_{nk}+{x^i_{-n}}\,[\mathbb{\widetilde S}^{(3,3)}]_{nk}+\overline{x^i_{-n}}\,[\mathbb{\widetilde S}^{(4,3)}]_{nk}\Big) \, w^k\\
&+\sum_{k=0}^\infty\sum_{n=1}^\infty \,  \Big({x^i_n}\,[{\mathbb{\widetilde S}}^{(1,4)}]_{nk}+\overline{x^i_n}\,[{\mathbb{\widetilde S}}^{(2,4)}]_{nk}+{x^i_{-n}}\,[\mathbb{\widetilde S}^{(3,4)}]_{nk}+\overline{x^i_{-n}}\,[\mathbb{\widetilde S}^{(4,4)}]_{nk}\Big) \, w^{-k}
\end{align*}
with semi-infinite matrices given by
\begin{align*}
{\widetilde{\mathbb{S}}}^{(1,1)}&=-\ta\,\mathcal{N}^{-1}_0 \gamma^{-\mathcal{N}}, 
&{\widetilde{\mathbb{S}}}^{(2,1)}&=\tb\, I_0\mathbb{M}^{(2,1)}I_0,
&{\widetilde{\mathbb{S}}}^{(3,1)}&=-\ta \,\mathcal{N}^{-1}_0\gamma^{-\mathcal{N}}\overline{C}\gamma^{-2\mathcal{N}},
&{\widetilde{\mathbb{S}}}^{(4,1)}&= \tb \,\mathbb{M}^{(4,1)}I_0,
\\
{\widetilde{\mathbb{S}}}^{(1,2)}&=-\ta\, \mathcal{N}^{-1}_0\gamma^{-\mathcal{N}}C, 
&{\widetilde{\mathbb{S}}}^{(2,2)}&=\tb \,I_0\mathbb{M}^{(2,2)},
&{\widetilde{\mathbb{S}}}^{(3,2)}&=-\ta\,\mathcal{N}^{-1}_0\gamma^{\mathcal{N}} + c\,\mathbf{e}_0\otimes\mathbf{e}_0, 
&{\widetilde{\mathbb{S}}}^{(4,2)}&= \tb \,\mathbb{M}^{(4,2)},
\\[2mm]
{\widetilde{\mathbb{S}}}^{(1,3)}&=-\tmu\tb\,\mathcal{N}^{-1}_0 \gamma^{-\mathcal{N}}, 
&{\widetilde{\mathbb{S}}}^{(2,3)}&=-\tmu\tb \,I_0\mathbb{M}^{(2,1)}I_0, 
&{\widetilde{\mathbb{S}}}^{(3,3)}&=\tmu\ta\,\mathcal{N}^{-1}_0\gamma^{-\mathcal{N}}\overline{C}\gamma^{-2\mathcal{N}}, 
&{\widetilde{\mathbb{S}}}^{(4,3)}&=- \tmu\tb \,\mathbb{M}^{(4,1)}I_0,
\\
{\widetilde{\mathbb{S}}}^{(1,4)}&=-\tmu\tb\mathcal{N}^{-1}_0\gamma^{-\mathcal{N}}C, 
&{\widetilde{\mathbb{S}}}^{(2,4)}&=-\tmu\tb \,I_0\mathbb{M}^{(2,2)}I_0, 
&{\widetilde{\mathbb{S}}}^{(3,4)}&=\tmu\ta\,\mathcal{N}^{-1}_0\gamma^{\mathcal{N}}, 
&{\widetilde{\mathbb{S}}}^{(4,4)}&=-\tmu \tb \,\mathbb{M}^{(4,2)}I_0
\end{align*}
with $c=2\tilde{\alpha}\ln\gamma-\tilde{\beta}$ and $\mathbf{e}_0=[1,0,0,\dots]$. 
\end{theorem}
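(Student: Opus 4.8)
The plan is to follow the proof of Theorem~\ref{Thm:Mat:Se} almost line for line, replacing the exterior trace by the interior one and the triple $(\alpha,\beta,\mu)$ by $(\tilde\alpha,\tilde\beta,\tilde\mu)$, while carefully tracking the additive constants that are forced to vanish in the exterior setting but survive here. I would begin from the two interior identities of Lemma~\ref{lemma:Single:layer}, which express $2\widetilde S_{\p\Om}[\phi]\big|^-$ and $\mathcal I^i\big[2\widetilde S_{\p\Om}[\phi]\big]\big|^-$ as fixed linear combinations of $\mathcal L[\phi]$, $\overline{\mathcal L[\overline\phi]}$ and the Cauchy-transform combination $-z\,\overline{\mathcal C[\phi]}+\overline{\mathcal C[\overline\zeta\phi]}$, together with the scalar $-c_\phi$. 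Setting $z=\Psi(w)$ with $|w|=\gamma$, I would then substitute the two interior series already at hand: Lemma~\ref{lem:int:L} for the logarithmic parts $\mathcal L[\phi]$ and $\overline{\mathcal L[\overline\phi]}$, and Lemma~\ref{Lem:sing:mat:in} for $-\Psi(w)\,\overline{\mathcal C[\phi]}+\overline{\mathcal C[\overline\zeta\phi]}$.

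With these substitutions in place, the remaining work is the routine but lengthy collection of the coefficients of $w^{k}$ ($k\ge1$) and $w^{-k}$ ($k\ge0$), exactly as in the proof of Theorem~\ref{Thm:Mat:Se}. The $\mathcal L[\phi]$-series produces the blocks acting on $x^i_n$ (the $\widetilde{\mathbb S}^{(1,\cdot)}$), the $\overline{\mathcal L[\overline\phi]}$-series the blocks acting on $x^i_{-n}$ (the $\widetilde{\mathbb S}^{(3,\cdot)}$), and the Cauchy-transform combination the blocks acting on $\overline{x^i_n}$ and $\overline{x^i_{-n}}$ through $\mathbb M^{(2,\cdot)}$ and $\mathbb M^{(4,\cdot)}$ (the $\widetilde{\mathbb S}^{(2,\cdot)}$ and $\widetilde{\mathbb S}^{(4,\cdot)}$); the $\mathcal I^i$-row additionally carries the factor pattern $(\tilde\mu\tilde\beta,-\tilde\mu\tilde\alpha,-\tilde\mu\tilde\beta)$ dictated by the second identity of Lemma~\ref{lemma:Single:layer}, and its $w^0$ coefficient is discarded (via a right factor $I_0$, or the automatic vanishing of the $k=0$ column) because $\mathcal I^i$ is defined only up to an additive constant. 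The sole structural departure from the exterior case is that the $\widetilde{\mathbb S}^{(4,\cdot)}$ blocks carry no \emph{left} factor $I_0$: the interior density $\phi$ has a genuine $\varphi_0$-component $x^i_0$, which enters the combination through the shifted transform $\mathcal C[\overline\zeta\varphi_0]$ even though $\mathcal C[\varphi_0]=0$ in $\Om$, so the index-$0$ row of $\mathbb M^{(4,\cdot)}$ must be retained for $\overline{x^i_0}$ (the zeroth entry of $\overline{{\bm x}^i_-}$) to contribute correctly.

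The step I expect to demand the most care, and which has no analogue in Theorem~\ref{Thm:Mat:Se}, is the bookkeeping of the genuine constant term in $2\widetilde S_{\p\Om}[\phi]$. Two constants arise simultaneously: the logarithmic constants $x^i_0\ln\gamma$ carried by \emph{both} $\mathcal L[\phi]$ and $\overline{\mathcal L[\overline\phi]}$ in Lemma~\ref{lem:int:L}, and the subtracted scalar $c_\phi$. Using the orthogonality $\int_{\p\Om}\varphi_k\,d\sigma=2\pi\delta_{k0}$, immediate from \eqref{exterior_basis} and \eqref{dsigma}, I would evaluate $c_\phi=\tfrac{\tilde\beta}{2\pi}\int_{\p\Om}\phi\,d\sigma=\tilde\beta\,x^i_0$, so that the total $w^0$ contribution proportional to $x^i_0$ equals $\tilde\alpha\big(x^i_0\ln\gamma+x^i_0\ln\gamma\big)-\tilde\beta x^i_0=(2\tilde\alpha\ln\gamma-\tilde\beta)\,x^i_0=c\,x^i_0$. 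Since $x^i_0$ is the zeroth entry of ${\bm x}^i_-$, this constant lands in the $w^{-k}$ block acting on $x^i_{-n}$ at the $(0,0)$ slot, which is precisely the rank-one correction $c\,\mathbf e_0\otimes\mathbf e_0$ appended to $\widetilde{\mathbb S}^{(3,2)}$. In the $\mathcal I^i$-row the analogous constant $\tilde\mu(\tilde\beta-\tilde\alpha)x^i_0\ln\gamma$ is absorbed into the unspecified additive constant of $\mathcal I^i$ and hence produces no $\mathbf e_0\otimes\mathbf e_0$ term, consistent with the stated $\widetilde{\mathbb S}^{(\cdot,4)}$. Matching all remaining coefficients against the definitions of $\mathbb M^{(i,j)}$ and of $\mathcal I^i$ in \eqref{def:I} then yields the sixteen matrices and completes the proof.
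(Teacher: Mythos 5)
Your proposal is correct and follows essentially the same route as the paper's proof: start from the interior identities of Lemma~\ref{lemma:Single:layer}, substitute the series of Lemma~\ref{lem:int:L} and Lemma~\ref{Lem:sing:mat:in}, and collect coefficients of $w^{\pm k}$. Your explicit bookkeeping of the constant term (computing $c_\phi=\tilde\beta x_0^i$ and combining it with the two $x_0^i\ln\gamma$ terms to obtain the rank-one correction $c\,\mathbf e_0\otimes\mathbf e_0$) and of the missing left factor $I_0$ on the $\widetilde{\mathbb S}^{(4,\cdot)}$ blocks is actually more detailed than the paper's rather terse argument, and both points check out.
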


\begin{proof}
By Lemma \ref{lemma:Single:layer}, we have
\begin{align}\notag
\begin{cases}
\ds
2 \widetilde{S}_{\p \Om}[\phi](z)=
\ta\,\mathcal{L}[\phi](z)+\ta\,\overline{\mathcal{L}[\overline{\phi}]}+\tb \left(-z\,\overline{\C[\phi](z)}+\,\overline{\C[\overline{\zeta}\phi](z)}\right)-c_\phi,\\[2mm]
\ds \mathcal{I}^i\big[2\widetilde{S}_{\partial \Omega}[\phi]\big](z)
= \tmu\left(\tb\,\mathcal{L}[\phi](z)-\ta\,\overline{\mathcal{L}[\overline{\phi}]}-\tb \left(-z\,\overline{\C[\phi](z)}+\,\overline{\C[\overline{\zeta}\phi](z)}\right)\right)\qquad\mbox{for }z\in \p\Om^-.
\end{cases}
\end{align}
By Lemma \ref{prop:C2:main}, $2 \widetilde{S}_{\p \Om}[\phi](z)$ admits the same expansion as $2 S_{\p \Om}[\psi](z)$ except the constant term $c_\phi $, where $\alpha$ and $\beta$ are replaced by $\ta$ and $\tb$. 

By Lemmas \ref{lem:int:L} and \ref{Lem:sing:mat:in}, we have
 \begin{align*}
 \mathcal{I}^i\big[2\widetilde{S}_{\partial \Omega}[\phi]\big](z)
= &-\tmu\sum_{k=0}^\infty\sum_{n=1}^\infty \Big( x^i_n\, \tb\left[\mathcal{N}^{-1}_0\gamma^{-\mathcal{N}}C\right]_{nk}-x^i_{-n}\, \ta\left[ \mathcal{N}^{-1}_0\gamma^{\mathcal{N}} \right]_{nk}\Big) w^{-k}\\
&-\tmu\sum_{k=1}^\infty \sum_{n=1}^\infty \Big(x^i_n \,\tb\left[\mathcal{N}^{-1}_0\gamma^{-\mathcal{N}}\right]_{nk}
-x^i_{-n}\,\ta \left[\mathcal{N}^{-1}_0\gamma^{\mathcal{N}}\overline{C}\gamma^{-2\mathcal{N}}\right]_{nk}\Big)w^{k}\\
&-\tmu\tb \sum_{k=1}^\infty\sum_{n=1}^\infty \, \Big(\overline{x^i_n}\,[\mathbb{M}^{(1,2)}]_{nk}+\overline{x^i_{-n}}\,[\mathbb{M}^{(1,4)}]_{nk}\Big) \, w^k\\
 &-\tmu\tb\sum_{k=0}^\infty\sum_{n=1}^\infty \,  \Big(\overline{x^i_n}\,[\mathbb{M}^{(2,2)}]_{nk}+\overline{x^i_{-n}}\,[\mathbb{M}^{(2,4)}]_{nk}\Big) \, w^{-k}
 \end{align*}
and this completes the proof.

\end{proof}

\subsection{Proof of Lemma \ref{Lem:sing:mat:ex}} \label{subsec:Proof:Lemma}

We set $\psi$ as in \eqref{densities:general}.
By \eqref{C_decomp} and Proposition \ref{prop:C2:main}, we get
\begin{align*}
-\Psi(w)\overline{\mathcal{C}[\psi]} + \overline{\mathcal{C}[\overline{\zeta}\psi](z)} 
&=-\Psi(w)\overline{\mathcal{C}_1[\psi]} + \overline{\mathcal{C}_1[\overline{\zeta}\psi](z)} \\
&=\Psi(w)\sum_{n=1}^\infty \overline{ x^e_{n}}\frac{1}{n \gamma^n}\overline{F_n'(z)}
-\sum_{n=1}^{\infty}\overline{x_{n}^e}\sum_{k=-1}^{\infty} \dfrac{a_k}{(n+k)\gamma^{n+2k}}\overline{F_{n+k}'(z)}\\
&\quad -\sum_{n=1}^{\infty}\overline{x_{-n}^e} \sum_{k=n+1}^{\infty} \dfrac{a_k}{(k-n)\gamma^{2k-n}}\overline{F_{k-n}'(z)}\\
&=:{I}+II+ III.
\end{align*}
Let us expand $I,II$, and $III$ separately. 

Using \eqref{eqn:Faberdefinition}, \eqref{Faber:deriv:coeffi}, \eqref{def:mat:D}, and \eqref{boundary:w}, we obtain
\begin{align*}
I
=&\bigg(\sum_{k=-1}^\infty a_k w^{-k}\bigg)\sum_{n=1}^\infty \overline{ x^e_{n}}\,\frac{1}{n \gamma^n}\bigg(\sum_{j=0}^{n-1} \overline{\widetilde{d}_{nj}} \,\overline{F_j(z)}\bigg)\\
=\,&\sum_{k=-1}^{\infty}\sum_{n=1}^{\infty}\sum_{j=0}^{n-1}a_k \, \overline{ x^e_{n}} \,\frac{\overline{\widetilde{d}_{nj}}}{n \gamma^n}  \,w^{-k}
\bigg( \gamma^{2j}w^{-j} + \sum_{l=1}^{\infty} \overline{c_{jl}}\gamma^{-2l}w^{l}\bigg)
\\
=\, & \sum_{k=1}^{\infty}\bigg(\sum_{j=0}^{\infty}\sum_{n=1}^{\infty} \overline{x_{n}^{e}}\, \overline{d_{nj}}\, \gamma^{2j}a_{-k-j}\bigg)w^{k}
+\sum_{k=0}^{\infty}\bigg(\sum_{j=0}^{\infty}\sum_{n=1}^{\infty} \overline{x_{n}^{e}}\, \overline{d_{nj}}\gamma^{2j}a_{k-j}\bigg)w^{-k}
\\
&+\sum_{k=1}^{\infty}\bigg(\sum_{l=0}^{\infty}\sum_{j=0}^{\infty}\sum_{n=1}^{\infty}\, \overline{x_{n}^{e}}\, \overline{d_{nj}}\,\overline{c_{jl}}\gamma^{-2l}a_{-k+l}\bigg)w^{k}
+\sum_{k=0}^{\infty}\bigg(\sum_{l=0}^{\infty}\sum_{j=0}^{\infty}\sum_{n=1}^{\infty}\,\overline{x_{n}^{e}}\, \overline{d_{nj}}\, \,\overline{c_{jl}}\gamma^{-2l}a_{k+l}\bigg)w^{-k}
\end{align*}
and it follows that
\beq\label{prop:eqn:1}
\begin{aligned}
 I=\, &\sum_{k=0}^\infty\sum_{n=1}^\infty \,\overline{x^e_{n}}\, \Big[\, \overline{D}\,\gamma^{2\mathcal{N}}\Psi_0
+ \overline{D}\,\overline{C}\,\gamma^{-2\mathcal{N}}{\Psi_-}\Big]_{nk} w^k\\
 &+\sum_{k=1}^\infty\sum_{n=1}^\infty \,  \overline{x^e_{n}} \,\Big[\,\overline{D}\,\gamma^{2\mathcal{N}}\Psi_-^{\top}
+ \overline{D}\,\overline{C}\,\gamma^{-2\mathcal{N}}{\Psi_+}\Big]_{nk} w^{-k}.
 \end{aligned}
\eeq
Similarly, we derive
\begin{align*}
II
&= 
-\sum_{n=1}^{\infty}\overline{x_{n}^e}\sum_{k=-1}^{\infty} \dfrac{a_k}{(n+k)\gamma^{n+2k}}\,
\bigg(\sum_{j=0}^{n+k-1} \overline{\widetilde{d}_{(n+k)j}} \,\overline{F_j(z)}\bigg)\\
&=-\sum_{n=1}^{\infty}\sum_{k=-1}^{\infty}\sum_{j=0}^{n+k-1}\,\overline{x_{n}^e}\,\frac{a_k}{\gamma^k}
\,\overline{d_{(n+k)j}}\bigg(\gamma^{2j}w^{-j}+\sum_{l=1}^{\infty}\overline{c_{jl}}\,\gamma^{-2l}w^{l}\bigg)
\\
&=-\sum_{k=1}^\infty \bigg(\sum_{l=0}^\infty\sum_{j=0}^\infty\sum_{n=1}^\infty \overline{x_{n}^{e}}\,\dfrac{a_{j-n}}{\gamma^{j-n}}\, {\overline{d_{js}}}\,\overline{c_{sk}}\gamma^{-2k}\bigg)w^{k}
-\sum_{k=0}^\infty\bigg(\sum_{j=0}^\infty \sum_{n=1}^\infty \overline{x_{n}^{e}}\,\dfrac{a_{j-n}}{\gamma^{j-n}}\,{\overline{d_{jk}}}\,\gamma^{2k} \bigg)w^{-k}
\end{align*}
and it follows that
\begin{align}\label{prop:eqn:2}
 II=-&\sum_{k=0}^\infty\sum_{n=1}^\infty \,\overline{x^e_{n}}\, \Big[ \gamma^{\mathcal{N}}\,\Psi_-^{\top}\,\gamma^{-\mathcal{N}}\overline{D}\,\overline{C}\gamma^{-2\mathcal{N}}  \Big]_{nk} w^k
 -\sum_{k=1}^\infty\sum_{n=1}^\infty \,  \overline{x^e_{n}} \,\Big[\gamma^{\mathcal{N}}\,\Psi_-^{\top}\,\gamma^{-\mathcal{N}}\,\overline{D}\,\gamma^{2\mathcal{N}} \Big]_{nk} w^{-k}.
\end{align}
Finally, the last component has the form
\begin{align*}
III
&=-\sum_{n=1}^{\infty}\overline{x_{-n}^e} \sum_{k=n+1}^{\infty} \dfrac{a_k}{(k-n)\gamma^{2k-n}}\,\bigg(\sum_{j=0}^{k-n-1}\,\overline{{\widetilde{d}}_{(k-n)j}}\,\overline{F_j(z)}\bigg)\\
&=-\sum_{n=1}^{\infty} \sum_{k=n+1}^{\infty}\sum_{j=0}^{k-n-1}\overline{x_{-n}^e}\, \dfrac{a_k}{\gamma^k}\,\overline{d_{(k-n)j}}\,\bigg(\gamma^{2j}w^{-j}+\sum_{l=1}^{\infty}\overline{c_{jl}}\,\gamma^{-2l}w^{l}\bigg)\\
&=-\sum_{k=1}^\infty \bigg( \sum_{l=0}^\infty\sum_{j=0}^\infty\sum_{n=1}^\infty \overline{x_{-n}^{e}} \,\frac{a_{j+n}}{\gamma^{j+n}}\,\overline{d_{jk}}\,\overline{c_{sk}}\,\gamma^{-2k}  \bigg)w^{k}
-\sum_{k=0}^\infty\bigg(\sum_{j=0}^\infty \sum_{n=1}^\infty \overline{x_{-n}^e}\,\frac{a_{j+n}}{\gamma^{j+n}}\,\overline{d_{jk}}\,\gamma^{2k}\bigg)w^{-k},
\end{align*}
which leads
\begin{align}\label{prop:eqn:3}
 III=-&\sum_{k=0}^\infty\sum_{n=1}^\infty \,\overline{x^e_{-n}}\, \Big[  \gamma^{-\mathcal{N}}\,\Psi_+\,\gamma^{-\mathcal{N}}\overline{D}\,\overline{C}\gamma^{-2\mathcal{N}}  \Big]_{nk} w^k
 -\sum_{k=1}^\infty\sum_{n=1}^\infty \,  \overline{x^e_{-n}} \,\Big[\gamma^{-\mathcal{N}}\,\Psi_+\,\gamma^{-\mathcal{N}}\,\overline{D}\,\gamma^{2\mathcal{N}}\Big]_{nk} w^{-k}.
\end{align}
Combining \eqref{prop:eqn:1}--\eqref{prop:eqn:3}, we complete the proof. 

\qed

\section{Matrix formulation for the elastostatic inclusion problem}
In this section, we derive a matrix formulation for the elastostatic inclusion problem \eqref{eqn:main:trans} using the exterior conformal mapping. The formulation consists of the parameter $\gamma$ and the conformal mapping coefficients $a_n$.

\subsection{Main results}\label{subsec:main_results}

By \eqref{complex:sol:u}, we have for $z\in\p\Om$, 
\begin{align*}
\begin{cases}
\ds 2 S_{\p \Om}[\psi](z)-2{\widetilde{S}}_{\partial \Omega}[\phi](z)=-2H(z),\\[2mm]
\ds \mathcal{I}^e\big[2S_{\partial \Omega}[\psi]\big](z)-\mathcal{I}^i\big[2{\widetilde{S}}_{\partial \Omega}[\phi]\big](z)=-2 \, \mathcal{I}^e[H](z)+\mbox{constant}.
\end{cases}
\end{align*}
Then, it follows from Theorems \ref{Thm:Mat:H}, \ref{Thm:Mat:Se} and \ref{Thm:Mat:Si} that
\begin{align*}
-2[\bm{h}^{(j)}]_{k}=\ &\sum_{n=1}^\infty \, \Big({x^e_n}\,[{\mathbb{S}}^{(j,1)}]_{nk}+\overline{x^e_n}\,[{\mathbb{S}}^{(j,2)}]_{nk}+{x^e_{-n}}\,[{\mathbb{S}}^{(j,3)}]_{nk}+\overline{x^e_{-n}}\,[{\mathbb{S}}^{(j,4)}]_{nk}\Big)\\
&-\sum_{n=1}^\infty \, \Big({x^i_n}\,[{\mathbb{\widetilde S}}^{(j,1)}]_{nk}+\overline{x^i_n}\,[{\mathbb{\widetilde S}}^{(j,2)}]_{nk}+{x^i_{-n}}\,[{\mathbb{\widetilde S}}^{(j,3)}]_{nk}+\overline{x^i_{-n}}\,[{\mathbb{\widetilde S}}^{(j,4)}]_{nk}\Big),\quad j=1,2,3,4, \ k\in \NN.
\end{align*}
For $j=2$ and $k=0$, we have
\begin{align*}
-2[\bm{h}^{(2)}]_0&=
\sum_{n=1}^\infty \, \Big({x^e_n}\,[{\mathbb{S}}^{(1,2)}]_{n0}+\overline{x^e_n}\,[{\mathbb{S}}^{(2,2)}]_{n0}+{x^e_{-n}}\,[{\mathbb{S}}^{(3,2)}]_{n0}+\overline{x^e_{-n}}\,[{\mathbb{S}}^{(4,2)}]_{n0}\Big)\\
&\quad
-\sum_{n=1}^\infty \, \Big({x^i_n}\,[{\mathbb{\widetilde{S}}}^{(1,2)}]_{n0}+\overline{x^i_n}\,[{\mathbb{\widetilde{S}}}^{(2,2)}]_{n0}+{x^i_{-n}}\,[{\mathbb{\widetilde{S}}}^{(3,2)}]_{n0}+\overline{x^i_{-n}}\,[{\mathbb{\widetilde{S}}}^{(4,2)}]_{n0}\Big) - 2\ta x^i_0\ln\gamma + \tb x^i_0,
\end{align*}
From the row vectors ${\bm x}^{e}_\pm$, ${\bm x}^{i}_\pm$ and ${\bm h}^{(j)}$ defined in \eqref{def:block:x} and \eqref{def:block:H}, we have
 \begin{align*}
-2{\bm h}^{(j)} = & {\bm x}^e_+\, {\mathbb{S}}^{(1,j)}+\overline{ {\bm x}^e_+ }\, {\mathbb{S}}^{(2,j)}
+ {\bm x}^e_- \, {\mathbb{S}}^{(3,j)}+ \overline{ {\bm x}^e_- }\,{\mathbb{S}}^{(4,j)}\\
&-{\bm x}^i_+\, {\mathbb{\widetilde S}}^{(1,j)}-\overline{ {\bm x}^i_+ }\,{\mathbb{\widetilde S}}^{(2,j)}
- {\bm x}^i_- \, {\mathbb{\widetilde S}}^{(3,j)}- \overline{ {\bm x}^i_- }\,{\mathbb{\widetilde S}}^{(4,j)},\quad j=1,2,3,4,
\end{align*}
and, by taking the complex conjugate,
\begin{align*}
-2\, \overline{{\bm h}^{(j)}}
=&{\bm x}^e_+\,\overline{ {\mathbb{S}}^{(2,j)}}+\overline{ {\bm x}^e_+ }\, \overline{{\mathbb{S}}^{(1,j)}}
+ {\bm x}^e_- \, \overline{{\mathbb{S}}^{(4,j)}}+ \overline{ {\bm x}^e_- }\, \overline{{\mathbb{S}}^{(3,j)}}\\[1mm]
&-{\bm x}^i_+\,\overline{{\mathbb{\widetilde S}}^{(2,j)}}-\overline{ {\bm x}^i_+ }\, \overline{\widetilde{\mathbb{S}}^{(1,j)}}
- {\bm x}^i_- \, \overline{\widetilde{\mathbb{S}}^{(4,j)}}- \overline{ {\bm x}^i_- }\, \overline{\widetilde{\mathbb{S}}^{(3,j)}},\quad j=1,2,3,4.
\end{align*}
We can rewrite these relations in a block-matrix form as follows. 
\begin{theorem}\label{thm:main:modi}
Let $H$ be a given background loading, and let the corresponding density functions \(\psi\) and \(\phi\) in \eqref{complex:sol:u} have the expansion \eqref{densities:general}. Following Notation~\ref{notation:x:h} and Notation~\ref{notation:x}, let \(\bm{x}\) denote the $(1 \times 8)$ block row vector consisting of the coefficients of \(\psi\) and \(\phi\) given by \eqref{def:block:x}, and let \(\bm{h}\) denote the $(1 \times 4)$ block row vector associated with \(H\) given by \eqref{def:block:H}.
Then $\bm x$ satisfies the infinite-dimensional linear system
\beq\label{eqn:main:matrix}
\bm{x}\mathbb{E} = -2\bm{h},
\eeq
equivalently,
\beq
{\mathbb{E}}^{T}\,{\bm x}^T=-2{\bm h}^T,
\eeq
where $\mathbb{E}$ is the $(8\times 8)$ block matrix defined by
\begin{align*}
\mathbb{E}&=
\left(
\begin{array}{cccc:cccc}
{\mathbb{S}}^{(1,1)} & \overline{ {\mathbb{S}}^{(2,1)}} & {\mathbb{S}}^{(1,2)} & \overline{ {\mathbb{S}}^{(2,2)}} & {\mathbb{S}}^{(1,3)} & \overline{ {\mathbb{S}}^{(2,3)}} & {\mathbb{S}}^{(1,4)} & \overline{ {\mathbb{S}}^{(2,4)}} \\[2mm]
{\mathbb{S}}^{(2,1)} & \overline{ {\mathbb{S}}^{(1,1)}} & {\mathbb{S}}^{(2,2)} & \overline{ {\mathbb{S}}^{(1,2)}} & {\mathbb{S}}^{(2,3)} & \overline{ {\mathbb{S}}^{(1,3)}} & {\mathbb{S}}^{(2,4)} & \overline{ {\mathbb{S}}^{(1,4)}} \\[2mm]
{\mathbb{S}}^{(3,1)} & \overline{ {\mathbb{S}}^{(4,1)}} & {\mathbb{S}}^{(3,2)} & \overline{ {\mathbb{S}}^{(4,2)}} & {\mathbb{S}}^{(3,3)} & \overline{ {\mathbb{S}}^{(4,3)}} & {\mathbb{S}}^{(3,4)} & \overline{ {\mathbb{S}}^{(4,4)}} \\[2mm]
{\mathbb{S}}^{(4,1)} & \overline{ {\mathbb{S}}^{(3,1)}} & {\mathbb{S}}^{(4,2)} & \overline{ {\mathbb{S}}^{(3,2)}} & {\mathbb{S}}^{(4,3)} & \overline{ {\mathbb{S}}^{(3,3)}} & {\mathbb{S}}^{(4,4)} & \overline{ {\mathbb{S}}^{(3,4)}} \\[3mm]
\hdashline
\noalign{\vskip 9pt} 
-\,{\mathbb{\widetilde S}}^{(1,1)} & -\, \overline{{\mathbb{\widetilde S}}^{(2,1)}} & -\,{\mathbb{\widetilde S}}^{(1,2)} & -\, \overline{{\mathbb{\widetilde S}}^{(2,2)}} & -\,{\mathbb{\widetilde S}}^{(1,3)} & -\, \overline{{\mathbb{\widetilde S}}^{(2,3)}} & -\,{\mathbb{\widetilde S}}^{(1,4)} & -\, \overline{{\mathbb{\widetilde S}}^{(2,4)}} \\[2mm]
-\,{\mathbb{\widetilde S}}^{(2,1)} & -\, \overline{{\mathbb{\widetilde S}}^{(1,1)}} & -\,{\mathbb{\widetilde S}}^{(2,2)} & -\, \overline{{\mathbb{\widetilde S}}^{(1,2)}} & -\,{\mathbb{\widetilde S}}^{(2,3)} & -\, \overline{{\mathbb{\widetilde S}}^{(1,3)}} & -\,{\mathbb{\widetilde S}}^{(2,4)} & -\, \overline{{\mathbb{\widetilde S}}^{(1,4)}} \\[2mm]
-\,{\mathbb{\widetilde S}}^{(3,1)} & -\, \overline{{\mathbb{\widetilde S}}^{(4,1)}} & -\,{\mathbb{\widetilde S}}^{(3,2)} & -\, \overline{{\mathbb{\widetilde S}}^{(4,2)}} & -\,{\mathbb{\widetilde S}}^{(3,3)} & -\, \overline{{\mathbb{\widetilde S}}^{(4,3)}} & -\,{\mathbb{\widetilde S}}^{(3,4)} & -\, \overline{{\mathbb{\widetilde S}}^{(4,4)}} \\[2mm]
-\,{\mathbb{\widetilde S}}^{(4,1)} & -\, \overline{{\mathbb{\widetilde S}}^{(3,1)}} & -\,{\mathbb{\widetilde S}}^{(4,2)} & -\, \overline{{\mathbb{\widetilde S}}^{(3,2)}} & -\,{\mathbb{\widetilde S}}^{(4,3)} & -\, \overline{{\mathbb{\widetilde S}}^{(3,3)}} & -\,{\mathbb{\widetilde S}}^{(4,4)} & -\, \overline{{\mathbb{\widetilde S}}^{(3,4)}} 
\end{array}
\right).
\end{align*}
Here, the matrices ${\mathbb{S}}^{(j,l)}$ and $\widetilde{{\mathbb{S}}}^{(j,l)}$ are given in Theorem \ref{Thm:Mat:Se} and Theorem \ref{Thm:Mat:Si}. 

In the block matrix $\mathbb{E}$, the upper four rows correspond to the transmission condition for $\mathbf{u}$, and the lower four rows correspond to that for the conormal derivatives. The first four columns represent the exterior single-layer potential, and the remaining four columns represent the exterior single-layer potential. Each $(2k)$-th row is the complex conjugate of the preceding row. 
\end{theorem}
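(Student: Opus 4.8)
The plan is to obtain the system \eqref{eqn:main:matrix} by assembling coefficient identities that the computation preceding the theorem has already produced, recognizing them as the eight block components of a single block equation; the only substantive work is careful bookkeeping of indices, conjugation, and the zeroth-order ($k=0$) terms. First I would start from the two complex transmission conditions on $\partial\Omega$ recorded just above the statement, namely $2S_{\partial\Omega}[\psi]-2\widetilde{S}_{\partial\Omega}[\phi]=-2H$ and $\mathcal{I}^e[2S_{\partial\Omega}[\psi]]-\mathcal{I}^i[2\widetilde{S}_{\partial\Omega}[\phi]]=-2\,\mathcal{I}^e[H]+\text{const}$, and substitute the four series expansions of Theorems~\ref{Thm:Mat:H}, \ref{Thm:Mat:Se}, and \ref{Thm:Mat:Si}. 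Since the analyticity of $\partial\Omega$ guarantees that each side is an absolutely convergent Laurent series in $w$ on an annulus containing $|w|=\gamma$ (Section~\ref{subsec:matrices}), I would equate the coefficient of $w^{k}$ ($k\ge1$) and of $w^{-k}$ ($k\ge0$) on both sides and use uniqueness of Laurent coefficients.

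Matching the $w^{\pm k}$ coefficients of the displacement condition yields the identities indexed by $j=1$ and $j=2$, while those of the traction condition yield $j=3$ and $j=4$. In each case the exterior density $\psi$ contributes the $\mathbb{S}^{(\cdot,j)}$ blocks and the interior density $\phi$ contributes the $\widetilde{\mathbb{S}}^{(\cdot,j)}$ blocks with the opposite sign, producing the four row-vector identities $-2\bm{h}^{(j)}=\bm{x}^e_+\mathbb{S}^{(1,j)}+\cdots-\overline{\bm{x}^i_-}\,\widetilde{\mathbb{S}}^{(4,j)}$. Next I would take the complex conjugate of each of these four identities, treating $\overline{\bm{x}^e_\pm}$ and $\overline{\bm{x}^i_\pm}$ as independent unknowns (legitimate because $\bm{x}$ carries the eight blocks disjointly). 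Conjugation turns each into $-2\overline{\bm{h}^{(j)}}=\bm{x}^e_+\overline{\mathbb{S}^{(2,j)}}+\overline{\bm{x}^e_+}\,\overline{\mathbb{S}^{(1,j)}}+\cdots$, where the swaps $\mathbb{S}^{(1,j)}\leftrightarrow\mathbb{S}^{(2,j)}$ and $\mathbb{S}^{(3,j)}\leftrightarrow\mathbb{S}^{(4,j)}$ (and likewise for $\widetilde{\mathbb{S}}$) record that conjugating $\bm{x}^e_+$ produces the coefficient originally attached to $\overline{\bm{x}^e_+}$.

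Stacking the four original and four conjugated identities gives eight row-vector equations whose right-hand sides are exactly the eight blocks of $-2\bm{h}$ in the order of \eqref{def:block:H}. For each fixed equation (column) $l$, reading off the coefficient multiplying each of the eight unknown blocks of $\bm{x}$ in the order of \eqref{def:block:x} reproduces column $l$ of $\mathbb{E}$: the exterior/interior sign split places $+\mathbb{S}$ in the top four block rows and $-\widetilde{\mathbb{S}}$ in the bottom four, and conjugation supplies the columns corresponding to $\overline{\bm{h}^{(j)}}$. This verifies $\bm{x}\mathbb{E}=-2\bm{h}$ blockwise, and transposing gives the equivalent form $\mathbb{E}^{T}\bm{x}^{T}=-2\bm{h}^{T}$.

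The step I expect to require the most care is the treatment of the zeroth-order and constant terms, which is where the scalar heuristic breaks and the extra structure enters. The interior density carries the $\varphi_0$ mode with coefficient $x^i_0$, and $L[\varphi_0]=\ln\gamma$ inside $\Omega$ by \eqref{L:varphi0}, so the $k=0$ coefficient of the displacement condition ($j=2$) picks up the additional contribution $-2\tilde{\alpha}\,x^i_0\ln\gamma+\tilde{\beta}\,x^i_0$; this is precisely encoded by the correction $c\,\mathbf{e}_0\otimes\mathbf{e}_0$ with $c=2\tilde{\alpha}\ln\gamma-\tilde{\beta}$ in $\widetilde{\mathbb{S}}^{(3,2)}$ (Theorem~\ref{Thm:Mat:Si}), so the entry $[\widetilde{\mathbb{S}}^{(3,2)}]_{00}=c$ must be retained rather than omitted. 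In parallel, the additive constant in the traction condition lives only in the $w^0$ mode and is consistent with $\mathcal{I}^e,\mathcal{I}^i$ being defined modulo constants (Lemma~\ref{lemma:Single:layer}); I would verify that this ambiguity is confined to the single $j=4$, $k=0$ equation and does not propagate, so that the block identity holds as stated. Once these zeroth-order entries are accounted for, the remainder of the argument is the mechanical transcription described above.
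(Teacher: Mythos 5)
Your proposal is correct and follows essentially the same route as the paper: substitute the expansions of Theorems~\ref{Thm:Mat:H}, \ref{Thm:Mat:Se}, and \ref{Thm:Mat:Si} into the two boundary transmission identities, match coefficients of $w^{\pm k}$ to get the four row-vector identities, conjugate to obtain the other four, and assemble the $(8\times 8)$ block system, with the special $j=2$, $k=0$ term $-2\tilde{\alpha}x^i_0\ln\gamma+\tilde{\beta}x^i_0$ absorbed into the $c\,\mathbf{e}_0\otimes\mathbf{e}_0$ correction of $\widetilde{\mathbb{S}}^{(3,2)}$ exactly as the paper does. Your explicit attention to Laurent-coefficient uniqueness and to confining the traction-side additive constant to the $w^0$ mode is slightly more careful than the paper's own presentation, but it is the same argument.
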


\paragraph{Geometric series solution to the transmission problem \eqref{eqn:main:trans}.} 
Given a solution $\bm x$ to \eqref{eqn:main:matrix}, we obtain the coefficients of the density functions $\psi$ and $\phi$ appearing in the expansion \ref{densities:general}.
Then, by applying Lemma~\ref{lemma:S:complex:all} along with the integral computations in Section~\ref{subsec:integral:density}, one can construct explicit series solutions to \eqref{eqn:main:trans} in terms of the coordinates defined by the exterior conformal mapping; see theorem 3.2 in \cite{Mattei:2021:EAS} for the case of a rigid inclusion.

 The rigorous justification of the convergence of the series solution and the interchangeability of summation orders in the derivations in Sections~\ref{sec:deri:H} and \ref{subsec:operators}, which depends on the decay rate of the conformal mapping coefficients $a_k$ as $k$ increases, remains for further analysis.

\subsection{Limiting cases of holes}
We now consider the limiting case of $\tlambda=\tmu=0$ (in other words, $\Om$ is a hole), then the corresponding problem is \begin{align}\label{eqn:main:trans:cavity}
\begin{cases}
\mathcal{L}_{\lambda,\mu} \mathbf{u} = 0&\text{in }\RR^2\setminus\overline{\Om},\\
\dfrac{\partial \mathbf{u}}{\partial \nu}\Big|^+ = 0 &\text{on } \partial \Om,\\
(\mathbf{u-H})(x) = O(|x|^{-1}) &\text{as }x\to +\infty.
\end{cases}
\end{align}
The solution has the representation \cite[Chapter 2]{Ammari:2015:MME}
$$\mathbf{u}(x) =\mathbf{H}(x) + \mathcal{S}_{\partial \Omega}[\bpsi_0](x), \quad 
x\in\mathbb{R}^2\setminus\overline{\Omega},$$where $\bpsi_0\in L^2_{\mathcal{R}}(\partial \Omega)$ satisfies the boundary condition on $\p\Om$ of \eqref{eqn:main:trans:cavity}. 

We can simplify the matrix formulation in Theorem \ref{thm:main:modi}, by using only the conormal derivative condition on $\p\Om$. 
The coefficients of the density function $\bpsi_0$ satisfy the matrix relation
\beq\notag
{\mathbb{E}}_0^T\,{\bm x}_0^T=-2{\bm h}_0^T,
\eeq
where ${\bm x}_0$ and ${\bm h}_0$ are $(1\times 4)$ block row vectors defined as
\begin{align*}
{\bm{x}}_0 &:= \begin{bmatrix} 
{\bm x}^e_+ &\  \overline{{\bm x}^e_+} &\  {\bm x}^e_- &\  \overline{{\bm x}^e_-}  \,
\end{bmatrix},\\
{\bm{h}}_0 &:= \begin{bmatrix} 
\bm{h}^{(3)} &\overline{\bm{h}^{(3)} } & \bm{h}^{(4)} & \overline{\bm{h}^{(4)}}\,
\end{bmatrix}.
\end{align*}
Here, ${\mathbb{E}}_0$ denotes the part of the matrix ${\mathbb{E}}$ corresponding to the conformal derivative of the exterior single-layer potential, and is given by
\begin{align*}
{\mathbb{E}}_0^{T} &=
\left(\
\begin{array}{cccc}
\left({\mathbb{S}}^{(1,3)}\right)^T & \left({\mathbb{S}}^{(2,3)}\right)^T & \left({\mathbb{S}}^{(3,3)}\right)^T & \left({\mathbb{S}}^{(4,3)}\right)^T \\[2mm]
\left(\overline{{\mathbb{S}}^{(2,3)}}\right)^T & \left(\overline{{\mathbb{S}}^{(1,3)}}\right)^T & \left(\overline{{\mathbb{S}}^{(4,3)}}\right)^T & \left(\overline{{\mathbb{S}}^{(3,3)}}\right)^T \\[2mm]
\left({\mathbb{S}}^{(1,4)}\right)^T & \left({\mathbb{S}}^{(2,4)}\right)^T & \left({\mathbb{S}}^{(3,4)}\right)^T & \left({\mathbb{S}}^{(4,4)}\right)^T \\[2mm]
\left(\overline{{\mathbb{S}}^{(2,4)}}\right)^T & \left(\overline{{\mathbb{S}}^{(1,4)}}\right)^T & \left(\overline{{\mathbb{S}}^{(4,4)}}\right)^T & \left(\overline{{\mathbb{S}}^{(3,4)}}\right)^T
\end{array}\
\right).
\end{align*}

In the following examples, we interpret the matrix representation corresponding to the background solution given by $H(z) = \overline{B_m F_m(z)}$.
\begin{example}[Disk]
We consider a disk cavity with
\begin{align*}
\Psi(w) = w + 0.5.
\end{align*}
Then, all terms vanish except for $x^{-}_{m}$ and
\begin{align*}
\bmat{
-\dfrac{\alpha}{m\gamma^m} & 0 & 0 & 0\\
0 & -\dfrac{\alpha}{m\gamma^m} &  0 & 0\\
0 & 0 & \dfrac{\beta\gamma^m}{m}&0\\
0 & 0 & 0&\dfrac{\beta\gamma^m}{m}
}
\bmat{
x_{m}^e\\[2mm] 
\overline{x_{m}^e}\\[2mm]
x_{m}^i\\[2mm] 
\overline{x_{m}^i}
}
=
-2\bmat{
0\\[2mm]
0\\[2mm]
\overline{B_m}\gamma^{2m}\\[2mm]
{B_m}\gamma^{2m}
}.
\end{align*}
Hence,
\begin{align*}
x^{e}_m = 0,\qquad x^{i}_{m} = -\frac{2\overline{B_m} m \gamma^m}{\beta}\qquad\mbox{for }m\in\mathbb{N}.
\end{align*}
These results are same with those in \cite{Cho:2024:ASR} as $\tilde{\mu}\to 0 $.
\end{example}

\begin{example}[Ellipse]
We consider an elliptical cavity with
\begin{align*}
\Psi(w) = w + 0.5 + \frac{0.3}{w}.
\end{align*}
In this case, all $x_n^e$ and $x_n^i$ vanish for $n \in \mathbb{N}$ except when $n = m$ and
\begin{align*}
&\bmat{
-\dfrac{\alpha}{m\gamma^m} & \beta\overline{a_1^{m-1}}\gamma^{-3m-2}(\gamma^4 - |a_1|^2) & -\alpha\dfrac{\overline{a_1^m}}{m\gamma^{3m}} & 0 \\
\beta{a_1^{m-1}}\gamma^{-3m-2}(\gamma^4 - |a_1|^2) & -\dfrac{\alpha}{m\gamma^m} & 0 & -\alpha\dfrac{{a_1^m}}{m\gamma^{3m}} \\
\beta\dfrac{a_1^m}{m\gamma^m} & 0 & \beta\dfrac{\gamma^m}{m} & 0 \\
0 & \beta\dfrac{\overline{a_1^m}}{m\gamma^m} & 0 & \beta\dfrac{\gamma^m}{m}
}
\bmat{
x_{m}^e \\[2mm]
\overline{x_{m}^e} \\[2mm]
x_{m}^i \\[2mm]
\overline{x_{m}^i}
}
\\&\qquad
=
-2\bmat{
\overline{B_m a_1^m}\gamma^{-2m}\\[2mm]
{B_m a_1^m}\gamma^{-2m}\\[2mm]
\overline{B_m}\gamma^{2m}\\[2mm]
{B_m}\gamma^{2m}
}.
\end{align*}
In particular, when $m=1$, we have
\begin{align*}
x_{1}^e
&= \dfrac{2\gamma^3(\lambda+2\mu)\left[(\lambda+\mu)(B_1^2a_1^2 + |B_1a_1|^2) + 2\mu|B_1a_1|^2\right]}{B_1a_1(\lambda+\mu)(\gamma^4-|a_1|^2)},\\[2mm]
x_{1}^i
&= -\dfrac{2\gamma(\lambda+2\mu)\left[(\lambda+\mu)(B_1^2a_1^2 + |B_1a_1|^2) + 2\mu|B_1|^2\gamma^4\right]}{B_1(\lambda+\mu)(\gamma^4-|a_1|^2)}.
\end{align*}
\end{example}

\section{Conclusion}

The plane elastostatic inclusion problem is a classical topic in applied mechanics and has been extensively studied because of its various applications. The layer potential technique introduces a convenient solution ansatz by reformulating the original system of partial differential equations into an equivalent boundary integral equation on the inclusion boundary. In two dimensions, the complex formulation offers an alternative framework in terms of complex holomorphic functions. 
Additionally,  the geometric function theory provides powerful tools to handle the transmission condition on the inclusion boundary, using a coordinate system introduced by the exterior conformal mapping, assuming the inclusion is a simply connected and bounded.

In this paper, we combine the layer potential technique, complex formulation, and geometric function theory to derive a matrix formulation for the plane elastostatic problem in terms of geometric density basis. This framework provides a promising foundation to resolve related problems, such as inverse problems of reconstructing elastic inclusions, neutral inclusions, and the effective property analysis of periodic structures. 

The convergence of the resulting series solution to the transmission problem and the interchangeability of summation orders, which depend on the decay of conformal mapping coefficients, remain important topics for future investigation.

\bibliography{2025_Elastic_lastname}{}
\bibliographystyle{plain}




\end{document}